\newtheorem{theorem}{Theorem}
\newtheorem{proposition}{Proposition}
\newtheorem{lemma}{Lemma}
\theoremstyle{remark}
\newtheorem{example}{Example}
\newtheorem{remark}{Remark}
\newcommand{\B}{\mathcal{B}}
\newcommand{\Ex}{\mathbb{E}}
\newcommand{\eqd}{\stackrel{d}{=}}
\newcommand{\Exx}{\mathbb{E}_x}
\newcommand{\Px}{\mathbb{P}_x}
\newcommand{\Py}{\mathbb{P}_y}
\newcommand{\E}{\mathcal{E}}
\newcommand{\J}{\mathcal{P}}
\begin{document}

\title{Self-similar solutions of fragmentation equations
revisited}

\author{Weronika Biedrzycka}

\author{Marta Tyran-Kami\'nska}
\address{Institute of
Mathematics, University of Silesia, Bankowa 14, 40-007 Katowice, Poland}
\email{wsiwek@us.edu.pl}
\email{mtyran@us.edu.pl}

\subjclass[2010]{Primary 47D06; Secondary 60J25, 60J35, 60J75}%
\keywords{piecewise deterministic Markov process,
substochastic semigroup, strongly stable semigroup, fragmentation models}%

\thanks{This research was supported by the Polish NCN grant No. 2014/13/B/ST1/00224}
\date{\today}

\begin{abstract}
 We study the large time behaviour  of the mass (size) of particles described by the fragmentation equation  with homogeneous breakup kernel.  We give necessary and sufficient conditions for the convergence of solutions to the unique self-similar solution.
\end{abstract}

\maketitle

\section{Introduction}
Fragmentation is a phenomenon of breaking up particles into a range of smaller sized particles, characteristic of  many natural processes ranging from e.g. polymer degradation~\cite{ziff86} in chemistry to breakage of
aggregates \cite{arinorudnicki04} in biology. A stochastic model of fragmentation was given in \cite{filippov61} for the first time and since then it has been studied extensively with  probabilistic
methods, see  \cite{bertoin06,haas10,wagner10,wieczorek15} and the references therein. There is also a deterministic approach through transport equations and  purely functional-analytic methods   \cite{melzak57,lamb97,banasiaklamb03,arlottibanasiak04,escobedo05,banasiakarlotti06,mischler16}, which we follow here.
We denote by $c(t,x)$ the number density of particles  of mass  (size)  $x>0$ at time $t>0$.
The equation describing the evolution of density is 
\begin{equation}\label{eq:emass}
\dfrac {\partial c(t,x)}{\partial t}=\int_{x}^\infty
b(x,y)a(y)c(t,y)dy-a(x)c(t,x),\quad t, x>0,
\end{equation}
with initial condition
\begin{equation}\label{eq:emass0}
c(0,x)=c_0(x),\quad x>0,
\end{equation}
where $a(x)$ is the breakage rate for particles of mass $x$ and $b(x,y)$ is  the production rate of particles of size $x$ from those of size $y$.  Both $a$ and $b$ are nonnegative Borel measurable functions. To ensure the conservation of the total mass the function $b$ has to satisfy
\begin{equation*}
\int_0^yb(x,y)xdx=y 
\quad\text{and}\quad b(x,y)=0\quad \text{for} \quad x\ge y.
\end{equation*}
If we let $\gamma(y,x)=b(x,y)a(y)$ then \eqref{eq:emass} has the
same form as in \cite{melzak57}, where the reader can find a brief physical interpretation and the derivation of the equation. For a discussion of the model we also  refer to \cite[Chapter 8]{banasiakarlotti06}.

In this paper we provide necessary and sufficient conditions for the existence of self-similar
solutions to the initial value problem~\eqref{eq:emass}--\eqref{eq:emass0}
when $a(x)=x^\alpha$ with $\alpha>0$ and the kernel $b$ is \emph{homogenous}, i.e., there exists a Borel measurable function $h\colon
(0,1)\to\mathbb{R}_+$ such that
\begin{equation}\label{eq:hk}
b(x,y)=\frac{1}{y}h\left(\frac{x}{y}\right) \quad \text{for}\quad 0<x<y 
\quad \text{and}\quad \int_{0}^1h(r)rdr=1.
\end{equation}
This means that the size $x$ of fragments of particles is proportional to the size $y$ of fragmenting particles, so that it is determined through the distribution of the ratio $x/y$ and does not depend on $y$.
We let $E$ denote the open interval $(0,\infty)$, $\E=\B(0,\infty)$ be the $\sigma$-algebra of Borel subsets of $(0,\infty)$, and $L^1=L^1(E, \E,m)$ be
the space of functions integrable with respect to the measure $m(dx)=x\,dx$ with the norm
\[
\|f\|=\int_0^\infty|f(x)|xdx.
\]
If $a(x)=x^\alpha$ with $\alpha\ge  0$ then the total mass is conserved \cite{lamb97,banasiak01}, so that if
\[
c_0\in D(m):=\{f\in L^1\colon f\ge 0, \|f\|=1\}
\]
then $c(t,\cdot)\in D(m)$ for all $t>0$, and if $\alpha<0$ the solutions loose mass due to the so called ``shattering'' phenomenon \cite{mcgradyziff87,haas03,banasiak04a,banasiakmokhtar05}.
For $\alpha>0$ we can represent \cite{michelmischler05,escobedo05} the solution $c$ of the fragmentation equation \eqref{eq:emass} as
\begin{equation*}
c(t,x)=\gamma(t)^2 u(\log \gamma(t),\gamma(t)x),\quad x,t>0, \quad \text{where
}\gamma(t)=(1+t)^{1/\alpha}
\end{equation*}
and  $u$ is the solution of the
following partial integro-differential equation
\begin{equation}\label{eq:evd2}
\dfrac {\partial u(t,x) }{\partial t}=-\frac{1}{x}\frac{\partial}{\partial x}(x^2u(t,x))-\varphi(x) u(t,x)+
P(\varphi u(t,\cdot))(x).
\end{equation}
Here the function $\varphi$ and the linear operator $P$ on $L^1$ are defined as
\begin{equation}\label{eq:MoPf}
\varphi(x)=\alpha x^\alpha\quad \text{and}\quad  Pf(x)=\int_x^\infty
\frac{1}{y}h\Bigl(\frac{x}{y}\Bigr)f(y)dy,\quad  x>0.
\end{equation}
In particular, if $u_*$ is a stationary solution of \eqref{eq:evd2} then
 \begin{equation}\label{eq:sss}
c_*(t,x)=\gamma(t)^2u_*(\gamma(t)x),\quad t\ge 0,x>0,\quad \gamma(t)=(1+t)^{1/\alpha},
\end{equation}
is said to be a \emph{self-similar solution} of \eqref{eq:emass}. It should be noted that  $c$ behaves as  a delta-like function; see Remark~\ref{r:sw}.

Our main result is the
following.

\begin{theorem}\label{thm:asc}
Let $a(x)=x^\alpha$ with $\alpha >0$ and let $b$ be as in \eqref{eq:hk}.
There exists a self-similar solution $c_*$ as in \eqref{eq:sss} with  $c_*(t,\cdot)\in D(m)$, $t\ge0$,  and every solution $c$ of~\eqref{eq:emass} with initial condition
$c_0\in D(m)$ satisfies
\begin{equation*}
\lim_{t\to\infty} \int_0^\infty |c(t,x) - c_*(t,x)|x\,dx=0
\end{equation*}
if and only if
\begin{equation}\label{eq:flm}
\int_0^1 \log z \,h(z)z dz>-\infty.
\end{equation}
\end{theorem}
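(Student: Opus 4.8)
The plan is to recast the assertion as a statement about the long-time behaviour of a single operator semigroup and then to read off condition \eqref{eq:flm} from a Lyapunov computation. For every $\gamma>0$ the dilation $u\mapsto\gamma^{2}u(\gamma\,\cdot)$ is a linear isometry of $L^{1}$, so by the change of variables recalled before \eqref{eq:evd2} and the definition \eqref{eq:sss},
\[
\int_{0}^{\infty}|c(t,x)-c_{*}(t,x)|\,x\,dx=\bigl\|S(\log\gamma(t))c_{0}-u_{*}\bigr\|,\qquad t\ge 0,
\]
where $\{S(t)\}_{t\ge 0}$ is the substochastic --- in fact, by \eqref{eq:hk}, stochastic --- semigroup on $L^{1}$ solving \eqref{eq:evd2}, and $u_{*}$ is a stationary density of $\{S(t)\}$. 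Since $\log\gamma(t)=\tfrac1\alpha\log(1+t)\to\infty$, the theorem is equivalent to: $\{S(t)\}$ possesses an invariant density $u_{*}\in D(m)$ with $\|S(t)f-u_{*}\|\to0$ for every $f\in D(m)$ if and only if \eqref{eq:flm} holds. I would prove this equivalent form; the self-similar solution is then $c_{*}(t,x)=\gamma(t)^{2}u_{*}(\gamma(t)x)$.

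The semigroup $\{S(t)\}$ is the transition semigroup of the piecewise deterministic Markov process on $E$ that flows along $\dot x=x$ (so $x\mapsto xe^{t}$), jumps at rate $\varphi(x)=\alpha x^{\alpha}$, and at a jump moves from $y$ to $ry$ with $r$ distributed according to the probability $h(r)r\,dr$ on $(0,1)$; its extended generator acts on sufficiently regular $g$ by $\mathcal{A}g(x)=xg'(x)+\varphi(x)\int_{0}^{1}(g(rx)-g(x))h(r)r\,dr$. Two structural observations prepare the ground. First, the free streaming-and-killing semigroup $\{S_{0}(t)\}$ generated by the first two terms of \eqref{eq:evd2} is explicit, $S_{0}(t)f(x)=e^{-2t}\exp(-x^{\alpha}(1-e^{-\alpha t}))f(xe^{-t})$, so that $\|S_{0}(t)f\|=\int_{0}^{\infty}e^{-y^{\alpha}(e^{\alpha t}-1)}|f(y)|\,y\,dy\to 0$; hence $\{S_{0}(t)\}$ is strongly stable for every $\alpha>0$, and $\{S(t)\}$ is its Kato--Voigt perturbation by the integral operator $f\mapsto P(\varphi f)$. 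Second, because $P$ is an integral operator $\{S(t)\}$ is partially integral, and the process is irreducible (the flow moves $x$ continuously upward, a jump moves it strictly downward). Consequently one is in the asymptotic-stability-or-sweeping alternative for stochastic semigroups: if $\{S(t)\}$ has an invariant density it is asymptotically stable (sweeping being incompatible with an invariant density), and otherwise it sweeps mass off every compact subset of $E$, in which case $S(t)c_{0}$ cannot converge in $L^{1}$ to any element of $D(m)$. Thus the whole theorem comes down to: \emph{$\{S(t)\}$ has an invariant density if and only if \eqref{eq:flm} holds}.

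Write $\kappa:=-\int_{0}^{1}(\log r)h(r)r\,dr\in(0,\infty]$, so \eqref{eq:flm} says $\kappa<\infty$. For sufficiency I would use the Lyapunov function $V(x)=|\log x|$. For $0<x\le1$ one has $V(rx)-V(x)=-\log r$ for every $r\in(0,1)$, whence
\[
\mathcal{A}V(x)=-1+\varphi(x)\kappa,\qquad 0<x\le 1,
\]
which is finite exactly when $\kappa<\infty$ and tends to $-1$ as $x\to0^{+}$; for $x\ge1$ a short estimate gives $\mathcal{A}V(x)=1-\varphi(x)\kappa+o(\varphi(x))$ as $x\to\infty$, the correction again controlled by \eqref{eq:flm}, so $\mathcal{A}V(x)\to-\infty$. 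Hence $\mathcal{A}V\le-1$ off a compact set, a Foster--Lyapunov drift condition, which yields an invariant probability measure $\pi$ with $\int V\,d\pi<\infty$; by partial integrality $\pi$ is absolutely continuous, $\pi=u_{*}\,dm$ with $u_{*}$ a density. For necessity, suppose $u_{*}$ is an invariant density and test $S(t)u_{*}=u_{*}$ against the bounded functions $w_{n}(x)=\min\{(\log(1/x))^{+},n\}$, which gives $\int_{0}^{\infty}\mathcal{A}w_{n}(x)\,u_{*}(x)\,x\,dx=0$; the jump part of $\mathcal{A}w_{n}$ is nonnegative, and on $x\in(\tfrac12,1)$ it is at least $\varphi(\tfrac12)\int_{0}^{1}\min\{-\log r,\tfrac n2\}h(r)r\,dr=:\varphi(\tfrac12)J_{n}$, while $\mathcal{A}w_{n}\ge-1$ everywhere. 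Since an invariant density of the irreducible semigroup $\{S(t)\}$ is a.e.\ positive, $\rho:=\int_{1/2}^{1}u_{*}(x)x\,dx>0$, and so $0=\int_{0}^{\infty}\mathcal{A}w_{n}u_{*}\,x\,dx\ge-1+\varphi(\tfrac12)\rho\,J_{n}$, i.e.\ $J_{n}\le(\varphi(\tfrac12)\rho)^{-1}$ for all $n$; letting $n\to\infty$ gives $\kappa\le(\varphi(\tfrac12)\rho)^{-1}<\infty$, which is \eqref{eq:flm}.

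Combining the last two paragraphs with the first gives the theorem, the self-similar profile being the invariant density $u_{*}$. I expect the main effort to be twofold: first, the infrastructure --- rigorously constructing $\{S(t)\}$ and its generator and casting the asymptotic-stability/sweeping alternative in a form that applies here (much of this precedes the theorem); and second, the Foster--Lyapunov verification together with its converse, which is where \eqref{eq:flm} is genuinely used. It is precisely the requirement that the logarithmic size decrement $-\log r$ at a fragmentation event have finite mean, so that the upward drift of the streaming flow can balance the downward jumps and prevent mass from being swept to the boundary $x=0$; when \eqref{eq:flm} fails, $V=|\log x|$ leaves the domain of $\mathcal{A}$, no invariant density exists, and the semigroup sweeps.
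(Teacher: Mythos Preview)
Your reduction to asymptotic stability of the growth--fragmentation semigroup is exactly what the paper does (the paper calls it $\{P(t)\}$), but from that point on the two arguments diverge. The paper does \emph{not} use a Foster--Lyapunov function. Instead it passes to the embedded chain $(X(t_n))_{n\ge0}$ at jump times, constructs its unique stationary law explicitly as the distribution of a perpetuity $X_\infty=\bigl(\sum_{k\ge1}\varepsilon_k\prod_{j\le k}\theta_j^{\alpha}\bigr)^{1/\alpha}$ (Lemma~\ref{l:X_infty}), and then invokes a criterion from \cite{biedrzyckatyran} (Theorem~\ref{t:contf}): under partial integrality and uniqueness/positivity of the embedded-chain stationary density, the semigroup is asymptotically stable \emph{iff} $\Ex(t_1)<\infty$. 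A short computation (Lemma~\ref{l:Ex_t1}) then shows $\Ex(t_1)=-\Ex\log\theta_1$, which is precisely \eqref{eq:flm}. Your Lyapunov route with $V(x)=|\log x|$ and the dual test functions $w_n$ is a genuinely different and more self-contained argument; it avoids the embedded-chain machinery of \cite{biedrzyckatyran}, and your necessity step is neat. Conversely, the paper's approach yields, as a by-product, an explicit description of the self-similar profile through the density of $Z_\infty$ (Proposition~\ref{p:scaling}), which a pure drift argument does not give.

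Two points in your outline would need care before it is a proof. First, in the necessity step you use $\int_0^\infty \mathcal{A}w_n\,u_*\,x\,dx=0$; since the jump part of $\mathcal{A}w_n$ is bounded by $2n\varphi$, this identity requires $\int\varphi\,u_*\,m<\infty$, i.e.\ that the stationary jump rate is finite. This is true here (in the paper it is \eqref{e:integ}), but it is not automatic and you should justify it before invoking the generator identity. Second, the Foster--Lyapunov step produces an invariant \emph{probability measure} for the Markov process; to get an invariant \emph{density} for the $L^1$ semigroup you need absolute continuity, which follows from the pre-Harris property (Lemma~\ref{l:preH}) rather than from partial integrality alone. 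Also, your claim ``$\mathcal{A}V\le-1$ off a compact set'' is slightly off near $0$, where $\mathcal{A}V(x)=-1+\varphi(x)\kappa>-1$; the correct statement is $\mathcal{A}V\le-c$ for some $c\in(0,1)$ off a compact, which is all Foster--Lyapunov needs.
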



To our knowledge  this result is new in the given generality. Particular self-similar solutions were obtained in \cite{peterson86,ziff91} using various methods. For a probabilistic approach to self-similar fragmentation we refer the reader to \cite{bertoin06} and the references therein.
In the mathematical literature, the self-similar solutions  and the asymptotic behaviour for fragmentation equation has been considered using deterministic analytic methods in \cite{escobedo05} where the existence of self-similar solution is proved under the assumption that
\[
\int_0^1z^k h(z)dz<\infty
\]
for some $k<1$ and the convergence is proved when $k\le \alpha$ with additional regularity constraints. The proof of our result is based on the representation of solutions of \eqref{eq:emass} as  densities of a  Markov process $\{Y(t)\}_{t\ge 0}$; see \eqref{d:Yt} and~\eqref{eq:cM}.
We define  another Markov process  $\{X(t)\}_{t\ge 0}$ corresponding to the growth-fragmentation equation \eqref{eq:evd2} such that
\begin{equation}\label{eq:two}
Y(t)=(1+t)^{-1/\alpha}X(\log (1+t)^{1/\alpha}) \quad \text{ for all } t\ge 0.
\end{equation}
This is a  piecewise deterministic Markov process  with good asymptotic behaviour, as will be shown in Section~\ref{s:markov} using results from \cite{biedrzyckatyran} based on properties of stochastic semigroups.
In Lemma~\ref{l:Ex_t1} we give a relation between condition \eqref{eq:flm} and the first jump time of the process $\{X(t)\}_{t\ge 0}$. At the end of Section~\ref{s:markov}  we also define such processes in the case when $\alpha<0$.

Asymptotic properties of growth-fragmentation type equations have been studied and improved in many works, see e.g. \cite{mischler16} and \cite{bertoinwatson16}  for recent approaches, however none of these results allow us to provide the necessary and sufficient condition \eqref{eq:flm}.
The case where $\alpha=0$ is treated in \cite{doumicescobedo16} and  \cite{bertoinwatson16}.
If $\alpha>0$ then it is known \cite{bertoincaballero02} that the process $\{1/Y(t)\}_{t\ge 0}$
is an example of a so-called self-similar Markov processes with increasing sample paths
and it follows from \cite[Theorem 1]{bertoincaballero02} that  $t^{1/\alpha} Y(t)$ converges in distribution to a random variable $Y_\infty$, which is non-degenerate under condition  \eqref{eq:flm}. Our approach provides convergence of densities of $(1+t)^{1/\alpha} Y(t)$, implying convergence in distribution and identifies the distribution of $Y_\infty$ as being a stationary distribution of the process $\{X(t)\}_{t\ge0}$,  absolutely continuous with respect to $m$ and with density $u_*$.
If \eqref{eq:flm} does not hold then $Y_\infty$  is equal to~$0$. At the end of Section \ref{s:remarks} we describe how to get this type of limit using our approach. Further study of asymptotic behaviour in this case is provided in \cite{caballerorivero09}. Self-similar Markov processes were also used in \cite{haas10} to get the large time behaviour of solutions of the fragmentation equation when $\alpha<0$.

\section{Preliminaries}

Let $(E,\E,m)$ be a $\sigma$-finite
measure space and $L^1=L^1(E,\E,m)$ be the space of integrable functions. We denote by $D(m)\subset L^1$ the set
of all \emph{densities} on $E$, i.e.
\[
D(m)=\{f\in L^1_+: \|f\|=1\}, \quad \text{where } L^1_+=\{f\in L^1: f\ge 0\},
\]
and $\|\cdot\|$ is the norm in $L^1$.
A linear operator $P\colon L^1\to L^1$ such that $P(D(m))\subseteq D(m)$
is called \emph{stochastic} or \emph{Markov}~\cite{almcmbk94}. It is called \emph{substochastic} if $P$ is
a positive contraction, i.e., $Pf\ge 0$  and $\|Pf\|\le \|f\|$ for all $f\in  L_+^1$.

Let $\J \colon E\times \E\to[0,1]$ be a \emph{stochastic transition kernel}, i.e., $\J (x,\cdot)$ is a
probability measure for each $x\in E$ and the function $x\mapsto\J (x, B)$ is measurable for each $B\in\E$, and
let $P$ be a stochastic operator on $ L^1$. If
\begin{equation}\label{d:top}
\int_E \J (x,B)f(x)m(dx)=\int_B Pf(x)m(dx)
\end{equation}
for all $B\in \E, f\in D(m)$,
then $P$ is called the \emph{transition} operator corresponding to $\J $.
Suppose that there
exists a measurable function $p\colon E\times E\to[0,\infty)$ such that
\[
Pf(x) \ge \int_E p(x,y)f(y)\, m(dy)
\]
for $m$-a.e.~$x\in E$ and for every density $f$. If $p$ can be chosen in such a way that
\[
\int_E\int_E p(x,y)\,m(dx)\, m(dy) >0
\]
then  $P$ is called \emph{partially integral} and  if $p$ is such that
\[
\int_E p(x,y)\, m(dy) >0
\]
$m$-a.e.~$x\in E$ then  $P$ is called \emph{pre-Harris}.

We can extend a stochastic operator $P$ beyond the space $ L^1$  in the following way.
If $0\le f_n\le f_{n+1}$, $f_n\in L^1$, $n\in\mathbb{N}$, then the pointwise almost everywhere limit of $f_n$
exists and will be denoted by $\sup_n f_n$. For  $f\ge 0$  we define
\[
Pf=\sup_n Pf_n \quad \text{for }  f=\sup_n f_n, f_n\in L^1_+.
\]
(Note that $Pf$ is independent of the particular approximating sequence $f_n$ and that $Pf$ may be infinite.)
Moreover, if $P$ is the transition operator corresponding to $\J $ then \eqref{d:top} holds for all measurable
nonnegative $f$.  A~nonnegative measurable $f_*$ is said to be \emph{subinvariant
(invariant)} for a stochastic operator $P$ if
$Pf_*\le f_*$ ($Pf_*=f_*$).

Let $\E_*\subset \E$ be a given family of measurable subsets of $E$.
A stochastic operator $P$ is called \emph{sweeping} with respect to $\E_*$ if
\begin{equation*}
\lim_{n\to\infty}\int_{B} P^n f(x)m(dx)=0\quad\text{for all }f\in D(m), B\in \E_*.
\end{equation*}
A nonnegative measurable $f_*$ is said to be \emph{locally integrable}  with respect to $\E_*$  if
\[
\int_B f_*(x)m(dx)<\infty \quad \text{for all }B\in \E_*.
\]
\begin{proposition}[{\cite[Corollary 3]{rudnicki95}}]\label{p:sweep}
Suppose that a stochastic operator $P$ is pre-Harris and has no invariant density. If $P$  has a subinvariant $f_*$  with $f_*>0$ a.e. and  $f_*$  is locally integrable with respect to $ \E_*$,
 then the operator  $P$ is sweeping with respect to  $ \E_*$.
\end{proposition}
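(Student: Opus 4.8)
The plan is to strip off the subinvariant function by a change of density, reducing to the case $f_*\equiv\mathbf 1$, and then to prove sweeping by splitting the resulting operator into its conservative and dissipative parts.

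First I would normalise by $f_*$. Set $\tilde m(dx)=f_*(x)\,m(dx)$; since $f_*>0$ $m$-a.e., $\tilde m$ is $\sigma$-finite and equivalent to $m$. Define $\tilde Pg=f_*^{-1}P(f_*g)$ on $L^1(\tilde m)$. One checks routinely that: $\tilde P$ is positive, $\|\tilde Pg\|_{\tilde m}=\|P(f_*g)\|_m$, so $\tilde P$ is stochastic; $\tilde P^ng=f_*^{-1}P^n(f_*g)$, whence $\int_B\tilde P^ng\,d\tilde m=\int_BP^n(f_*g)\,dm$; an invariant density of $\tilde P$ would make $f_*g$ an invariant density of $P$, so $\tilde P$ has none; $\tilde P$ has the minorant $\tilde p(x,y)=f_*(x)^{-1}p(x,y)$, with $\int\tilde p(x,y)\,\tilde m(dy)=f_*(x)^{-1}\int p(x,y)f_*(y)\,m(dy)>0$ $m$-a.e.\ (here $f_*>0$ a.e.\ is used), so $\tilde P$ is pre-Harris; $\tilde P\mathbf 1=f_*^{-1}Pf_*\le\mathbf 1$, so $\mathbf 1$ is subinvariant for $\tilde P$, i.e.\ $\tilde m$ is an excessive measure; and $\tilde m(B)=\int_Bf_*\,dm<\infty$ for $B\in\E_*$. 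By the transfer identity above, $\tilde P$ sweeps with respect to $\E_*$ if and only if $P$ does, so it suffices to prove the statement when $f_*\equiv\mathbf 1$: then $m$ is excessive, $P$ is a positive contraction on both $L^1(m)$ and $L^\infty(m)$, and we must show $P$ sweeps every set of finite $m$-measure.

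Now I would invoke the Hopf decomposition $E=C\sqcup D$ into the conservative and dissipative parts of $P$. On the dissipative part one has the ``transient'' behaviour: comparing $\mathbf 1_B$ ($B\subseteq D$, $m(B)<\infty$) with a strictly positive $L^1$ reference function shows $\sum_nP^{*n}\mathbf 1_B<\infty$ $m$-a.e.\ on $D$, so $P^{*n}\mathbf 1_B\to0$ $m$-a.e., and $\int_BP^nf\,dm=\int f\,P^{*n}\mathbf 1_B\,dm\to0$ for every $f\in D(m)$ by dominated convergence. On the conservative part, since $m$ is excessive and an excessive measure is invariant on the conservative part, $m|_{C}$ is invariant; decomposing $C$ into ergodic components, $P$ restricted to each is again stochastic and pre-Harris, its invariant measure is a constant multiple of $m$ restricted to that component, and it must have infinite total mass, for otherwise its normalisation would be an invariant density of $P$, against the hypothesis. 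Finally, a conservative, partially integral Markov operator with an infinite invariant measure sweeps from every set of finite invariant measure; applying this on each ergodic component, recombining (dominated convergence over the at most countably many components), and adding the dissipative part, we conclude that $P$ sweeps every set of finite $m$-measure, hence every $B\in\E_*$ after undoing the normalisation. The pre-Harris hypothesis is used here precisely through the partial integrality it guarantees.

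The step I expect to be the main obstacle is the conservative case with an infinite invariant measure. Hopf's ratio ergodic theorem alone gives only that the Cesàro averages $\tfrac1N\sum_{n<N}P^nf$ tend to $0$ on sets of finite invariant measure, whereas sweeping demands $\int_BP^nf\,dm\to0$ along the full sequence; excluding the persistence of oscillations (asymptotic periodicity) is exactly where partial integrality is indispensable, and this is the technical heart of \cite[Corollary~3]{rudnicki95} --- one uses the minorant kernel to exhibit an iterate $P^{n_0}$ that is sufficiently ``spread out'' to damp such oscillations. A secondary, more routine difficulty is the bookkeeping of the ergodic-component decomposition of the conservative part, so that ``no invariant density'' can be applied component by component and the resulting sweeping statements recombined.
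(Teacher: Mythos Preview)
The paper does not prove this proposition; it is quoted as \cite[Corollary~3]{rudnicki95} and invoked later (in the proof of Proposition~\ref{p:sweeping}) as an external input. There is therefore no in-paper argument to compare yours against.

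On your outline itself: the normalisation by $f_*$ to make the reference measure excessive and the Hopf split $E=C\sqcup D$ are standard, and the dissipative half is routine. Two weaknesses remain. First, your assertion of ``at most countably many'' ergodic components on the conservative part is unjustified; in general the ergodic decomposition is a disintegration over a possibly uncountable parameter space, and you have not explained how the pre-Harris structure reduces this (or, more to the point, why one should decompose at all rather than argue directly from the minorant). Second, you correctly isolate the conservative, infinite-invariant-measure case as the crux and name the mechanism (a minorant kernel on some iterate rules out asymptotic periodicity so that Ces\`aro convergence upgrades to convergence along the full sequence), but you do not carry it out; as written, your argument is a reduction to, rather than an independent proof of, the cited result.
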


We conclude this section with the notion of stochastic semigroups and asymptotic behaviour of such semigroups.
A family of stochastic operators $\{P(t)\}_{t\ge 0}$ on $L^1$  which is a
$C_0$-\emph{semigroup}, i.e.,
\begin{enumerate}[\upshape (1)]
\item $P(0)=I$ (the identity operator);
\item $P(t+s)=P(t)P(s)$ for every $s,t\ge 0$;
\item for each $f\in L^1$ the mapping $t\mapsto P(t)f$ is continuous: for each $s\ge 0$
\[
\lim_{t\to s^{+}}\|P(t)f-P(s)f\|=0;
\]
\end{enumerate}
is called a \emph{stochastic semigroup}.
A nonnegative measurable $f_*$ is said to be
\emph{subinvariant (invariant)} for the semigroup $\{P(t)\}_{t\ge 0}$ if it is subinvariant (invariant) for each
operator $P(t)$.

 A
stochastic semigroup $\{P(t)\}_{t\ge 0}$ is called \emph{asymptotically stable} if it has an invariant
density $f_*$ such that
\begin{equation*}\label{d:as}
\lim_{t\to\infty}\|P(t) f-f_*\|=0\quad\text{for all }f\in D(m)
\end{equation*}
and  \emph{partially integral} if, for some $s>0$, the operator $P(s)$ is partially integral.
A stochastic semigroup  is called \emph{sweeping} with respect to  $\E_*$ if
\begin{equation*}\label{sweeps1}
\lim_{t\to\infty}\int_{B} P(t) f(x)m(dx)=0\quad\text{for all }f\in D(m), B\in \E_*.
\end{equation*}

\section{Construction of Markov processes}\label{s:markov}

In the first part of this section we construct two  Markov processes such that \eqref{eq:two} holds and their distributions are related to equations~\eqref{eq:emass} and ~\eqref{eq:evd2}. The second part contains the proof of Theorem~\ref{thm:asc}.  At the end of this section we discuss the case of $\alpha<0$. 

Let $\varepsilon_n, \theta_n$, $n\in \mathbb{N}$, be sequences of independent random variables, where the
$\varepsilon_n$ are exponentially distributed with mean $1$ and the $\theta_n$ are identically distributed with distribution function $H$  on $(0,1)$ of the form
\begin{equation}\label{d:H}
H(r)=\Pr(\theta_1\le r)=\int_0^r h(z)z dz,\quad r\in (0,1).
\end{equation}
If $Y_0$ is a positive random variable independent of $\theta_n$, $n\in \mathbb{N}$, then the sequence
\[
Y_{n}=\theta_n Y_{n-1},\quad n\ge 1,
\]
defines a discrete-time Markov process with stochastic transition kernel
\begin{equation}\label{eq:Jfr}
\mathcal{P}(x,B)=\int_{0}^1 1_B(z x)h(z)z dz, \quad x> 0, B\in \B(0,\infty).
\end{equation}
The transition operator $P$ on $L^1$ corresponding to $\mathcal{P}$ is as in \eqref{eq:MoPf}.

The process $\{Y(t)\}_{t\ge 0}$ is a pure jump Markov process \cite[Section 6.1]{tyran09} with the jump rate function $a(x)=x^\alpha$ and the  jump distribution $\mathcal{P}$ so that the process stays at $x$ for a random time, which is called a holding time and has an exponential distribution with mean $1/a(x)$, and then it jumps according to the probability distribution $\mathcal{P}(x,\cdot)$, independently on how long it stays at $x$.
Therefore we define the sample path of $\{Y(t)\}_{t\ge 0}$ starting at $Y(0)=Y_0=x$ as
\begin{equation}\label{d:Yt}
Y(t)=Y_n,\quad \tau_{n}\le t<\tau_{n+1}, n\ge 0,
\end{equation}
where $\tau_n$ are the jump times
\[
\tau_0=0,\quad \tau_n:=\sigma_n+\tau_{n-1},\quad
n\ge 1,
\]
and $\sigma_n$ are the holding times  defined by
\[
\sigma_n:=\frac{\varepsilon_n}{a(Y_{n-1})}=\frac{\varepsilon_n}{Y_{n-1}^\alpha},\quad n\ge 1.
\]
Note that (see e.g. \cite[Section 6.1]{tyran09}) if the probability density function of $Y(0)$ satisfies
\begin{equation}\label{eq:ipdf}
\Pr(Y(0)\in B)= \int_B c_0(x)x\,dx 
\end{equation}
where $c_0\in D(m)$, then
\begin{equation}\label{eq:cM}
\Pr(Y(t)\in B)=\int_B c(t,x)x\,dx\quad \text{for all }t>0, B\in \B(0,\infty),
\end{equation}
where $c$ is the solution of equation~\eqref{eq:emass} with initial condition $c_0$.

The sample path of $\{X(t)\}_{t\ge 0}$ starting from $X(0)=X_0=x$ is defined as
\begin{equation*}
X(t)=e^{t-t_n}X_n,\quad t_{n}\le t< t_{n+1}, n\ge 0,
\end{equation*}
where   $t_n$ are the jump times
\begin{equation}\label{eq:t1}
t_0:=0,\quad  t_n=\log\left(\frac{\varepsilon_n}{X_{n-1}^\alpha}+1\right)^{1/\alpha}+t_{n-1}
\end{equation}
and $X_n=X(t_n)$ are the post-jump positions
\begin{equation}\label{d:xi}
X_{n}=\theta_n \left(\varepsilon_n+X_{n-1}^\alpha\right)^{1/\alpha},\quad n\ge 1.
\end{equation}
The process $\{X(t)\}_{t\ge 0}$, representing  fragmentation with growth,
 is the minimal piecewise deterministic Markov process \cite[Section 6.2]{tyran09} with characteristics $(\pi,\varphi,\mathcal{P})$,  where
\[
\varphi(x)=\alpha x^\alpha\quad\text{and}\quad \pi_tx=e^tx,\quad x>0,t\ge 0.
\]
This is a particular example of a semiflow with jumps as studied in \cite{biedrzyckatyran}, where the jumps are defined by the mappings $T_\theta(x)=\theta x$ and densities $p_\theta(x)=h(\theta)\theta$ for $x\in E=(0,\infty)$, $\theta\in \Theta=(0,1)$.

Now, if $X(0)=Y(0)$ and $Y(0)$ satisfies \eqref{eq:ipdf},  then
\[
\Pr(X(t)\in B)=\int_B P(t)c_0(x)xdx\quad  \text{for all }t>0, B\in \B(0,\infty),
\]
where $\{P(t)\}_{t\ge 0}$ is a stochastic semigroup on $L^1$ and $u(t,x)=P(t)c_0(x)$ is the solution of \eqref{eq:evd2} with initial condition $u(0,x)=c_0(x)$, see \cite[Section 6.2]{tyran09}.
This and~\eqref{eq:two} imply that the solution $c$ of equation \eqref{eq:emass} with initial condition $c_0$
can be represented as
\begin{equation}\label{eq:eq}
c(t,x)=\gamma(t)^2 P(\log \gamma(t))c_0(\gamma(t)x),\quad t>0, x>0, \text{ where
}\gamma(t)=(1+t)^{1/\alpha}.
\end{equation}
Hence, if the semigroup $\{P(t)\}_{t\ge 0}$ is asymptotically stable with invariant density $u_*$ then
\[
\lim_{t\to\infty}\int_{0}^\infty|c(t,x)-\gamma(t)^2u_*(\gamma(t)x)|x\,dx=\lim_{t\to\infty}\|P(\log
\gamma(t))c_0-u_*\|=0.
\]
Consequently, this reduces  the proof of Theorem~\ref{thm:asc} to the study of asymptotic stability of the
semigroup $\{P(t)\}_{t\ge 0}$.

For  the proof of Theorem~\ref{thm:asc} we need the following result \cite[Corollary 3.16]{biedrzyckatyran}, which is a refinement of \cite[Theorem 1.1]{biedrzyckatyran}.
\begin{theorem}\label{t:contf}
Assume that  the  semigroup  $\{P(t)\}_{t\geq 0}$ is partially integral and that the chain $(X(t_n))_{n\ge 0}$ defined in \eqref{d:xi} has only one invariant probability measure $\mu_*$,  absolutely continuous
with respect to~$m$. If the density $f_*=d\mu_{*}/dm$ is strictly positive  a.e., then $\{P(t)\}_{t\geq 0}$ is asymptotically stable if and only if
\begin{equation}\label{eq:R0v}
\Ex(t_1):=\int_{0}^\infty \Exx(t_1)f_*(x) m(dx) < \infty,
\end{equation}
where $t_1$ is the first jump time in \eqref{eq:t1} and $\Exx$ denotes the expectation operator with respect to the distribution $\Px$ of the process starting at $X(0) =x$.
\end{theorem}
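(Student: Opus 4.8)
The plan is to reduce the statement to the general dichotomy for partially integral stochastic semigroups---such a semigroup, under an irreducibility condition that is guaranteed here by $f_*>0$ a.e., is asymptotically stable precisely when it admits an a.e.-positive invariant density---and then to identify \emph{that} condition with $\Ex(t_1)<\infty$ by means of the Markov-renewal correspondence between invariant measures of the process $\{X(t)\}_{t\ge 0}$ and of its embedded jump chain $(X(t_n))_{n\ge 0}$ from \eqref{d:xi}.

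First I would build, from the chain-invariant probability $\mu_*$, the candidate invariant measure of the continuous-time process by spreading $\mu_*$ along the semiflow $\pi_s x=e^s x$ over one inter-jump interval:
\[
\nu_*(B)=\int_E \Exx\!\left(\int_0^{t_1}1_B(e^s x)\,ds\right)\mu_*(dx),\qquad B\in\E,
\]
where $t_1$ is the first jump time in \eqref{eq:t1}. A computation using the strong Markov property at $t_1$ shows that $\nu_*$ is invariant for $\{X(t)\}_{t\ge 0}$, hence for $\{P(t)\}_{t\ge 0}$ acting on measures, and putting $B=E$ gives $\nu_*(E)=\int_E\Exx(t_1)\,\mu_*(dx)=\Ex(t_1)$ as in \eqref{eq:R0v}. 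Conversely, any $\sigma$-finite invariant measure of $\{X(t)\}_{t\ge 0}$ induces, by the standard correspondence for piecewise deterministic Markov processes (sampling at the jump times $t_n$), an invariant measure for the chain, and this inverts the previous construction up to a multiplicative constant; so the assumed uniqueness of $\mu_*$ forces $\nu_*$ to be, up to scalars, the only $\sigma$-finite invariant measure of the semigroup. Since $s\mapsto e^s x$ is a diffeomorphism and $\mu_*=f_*\,m$, an explicit change of variables ($y=e^s x$, together with integrating out the exponential holding time) shows $\nu_*\ll m$ with a density $u_*$ that is strictly positive a.e.\ because $f_*>0$ a.e. Thus $\{P(t)\}_{t\ge 0}$ has---uniquely up to scaling---an a.e.-positive $\sigma$-finite invariant function $u_*$, and $u_*\in L^1$ if and only if $\|u_*\|=\nu_*(E)=\Ex(t_1)$ is finite.

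It then remains to match the two sides. If $\Ex(t_1)<\infty$, then $u_*/\Ex(t_1)$ is an invariant density that is positive a.e., so by the quoted dichotomy the partially integral semigroup $\{P(t)\}_{t\ge 0}$ is asymptotically stable. Conversely, if $\{P(t)\}_{t\ge 0}$ is asymptotically stable it has an invariant density $v_*$; then $v_*\,m$ is a finite $\sigma$-finite invariant measure, hence a scalar multiple of $\nu_*$, which therefore has finite total mass, i.e.\ $\Ex(t_1)<\infty$. (Equivalently, when $\Ex(t_1)=\infty$ one may pass to a skeleton $P(s)$, which is partially integral, pre-Harris, and has the locally integrable positive subinvariant density $u_*$ but no invariant density, and invoke Proposition~\ref{p:sweep} to see that the semigroup is sweeping with respect to the compact subsets of $E$, in particular not asymptotically stable.)

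The main obstacle I expect is making the measure correspondence airtight---especially the converse direction, which requires that the process performs infinitely many jumps ($t_n\to\infty$ a.s.) so that the embedded chain is well defined and carries all of the invariant mass---and, within the dichotomy, checking that partial integrality of some $P(s)$ together with $f_*>0$ a.e.\ yields the irreducibility that rules out a nontrivial invariant decomposition of the semigroup. By contrast, verifying that $\nu_*$ is invariant and that the spreading and sampling maps are mutually inverse is the routine Markov-renewal computation, and the absolute continuity and a.e.-positivity of $u_*$ follow at once from the explicit flow $\pi_s x=e^s x$.
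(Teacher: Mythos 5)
The paper does not prove Theorem~\ref{t:contf}: it is quoted directly from \cite{biedrzyckatyran} (Corollary 3.16 there). Your reconstruction is correct in outline and follows essentially the same route as that reference and as the machinery this paper itself redeploys in Section~\ref{s:remarks} --- your $\nu_*$ is the measure with density $\overline{f}_*=R_0f_*$, the identity $\nu_*(E)=\Ex(t_1)=\|\overline{f}_*\|$ and the uniqueness of the invariant density up to scalars are exactly the ingredients cited there, the ``dichotomy'' is the Pich\'or--Rudnicki asymptotic stability theorem underlying \cite[Theorem 1.1]{biedrzyckatyran}, and your sweeping alternative via Proposition~\ref{p:sweep} is precisely how Proposition~\ref{p:sweeping} handles the non-integrable case.
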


We first show that all assumptions of Theorem~\ref{t:contf} are satisfied. We next prove in Lemma~\ref{l:Ex_t1} that conditions \eqref{eq:R0v} and \eqref{eq:flm} are equivalent.

\begin{lemma}\label{l:preH}
For each $t>0$ the operator $P(t)$ is pre-Harris. In particular, the semigroup $\{P(t)\}_{t\ge 0}$ is partially integral.
\end{lemma}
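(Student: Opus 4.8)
The plan is to exhibit, for every fixed $t>0$, a measurable kernel $k_t\colon E\times E\to[0,\infty)$ such that
\[
P(t)f(x)\ \ge\ \int_E k_t(x,y)\,f(y)\,m(dy)\qquad\text{for every }f\in D(m)\text{ and almost every }x,
\]
and such that $\int_E k_t(x,y)\,m(dy)>0$ for every $x\in E$. By definition this makes $P(t)$ pre-Harris, and it makes the semigroup partially integral at once, since then $\int_E\int_E k_t(x,y)\,m(dx)\,m(dy)=\int_E\bigl(\int_E k_t(x,y)\,m(dy)\bigr)m(dx)>0$.

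For $k_t$ I would isolate the part of $P(t)$ carried by those trajectories of $\{X(t)\}_{t\ge0}$ which perform \emph{exactly one} jump on $[0,t]$; since $P(t)$ is a positive operator this is a genuine minorant. Write $\Phi_s(x)=\exp\bigl(-\int_0^s\varphi(e^{r}x)\,dr\bigr)=\exp\bigl(-x^{\alpha}(e^{\alpha s}-1)\bigr)$ for the probability, read off from \eqref{eq:t1}, that the process started at $x$ has not jumped by time $s$, and recall that a jump takes the current position $z$ to $\theta z$ with $\theta$ distributed on $(0,1)$ with density $h(\theta)\theta$. A trajectory started at $y$ that jumps once, at time $s\in(0,t)$, sits at $\theta e^{t}y$ at time $t$ (it flows to $e^{s}y$, jumps to $\theta e^{s}y$, then flows for the remaining time $t-s$). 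Integrating the first-jump density $\varphi(e^{s}y)\Phi_s(y)$, the post-jump survival factor $\Phi_{t-s}(\theta e^{s}y)$ and the jump density $h(\theta)\theta$ over $s\in(0,t)$ and $\theta\in(0,1)$, and then substituting $x=\theta e^{t}y$ (so that $\theta\,d\theta=e^{-2t}y^{-2}\,m(dx)$ with $m(dx)=x\,dx$), one is led to
\[
k_t(x,y)=1_{\{x<e^{t}y\}}\,\frac{e^{-2t}}{y^{2}}\,h\!\Bigl(\frac{x}{e^{t}y}\Bigr)\int_0^t \varphi(e^{s}y)\,\Phi_s(y)\,\Phi_{t-s}\!\bigl(x e^{s-t}\bigr)\,ds ,
\]
which is just the one-jump contribution to the transition function of $X$, transported to an operator on $L^1$ through the identification $\Pr(X(t)\in B)=\int_B P(t)c_0(x)\,x\,dx$.

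It remains to check the positivity of $\int_E k_t(x,y)\,m(dy)$. For $x,y,s>0$ the factors $\varphi(e^{s}y)$, $\Phi_s(y)$ and $\Phi_{t-s}(x e^{s-t})$ are all strictly positive, so for $x<e^{t}y$ the sign of $k_t(x,y)$ is governed solely by $h(x/(e^{t}y))$. Put $\Theta_+=\{\theta\in(0,1):h(\theta)>0\}$; its Lebesgue measure is positive because $\int_0^1 h(\theta)\theta\,d\theta=1$. Fix $x>0$: as $y$ runs over $(x e^{-t},\infty)$ the map $y\mapsto x/(e^{t}y)$ is a decreasing diffeomorphism onto $(0,1)$, so $\{y>x e^{-t}:x/(e^{t}y)\in\Theta_+\}$ has positive Lebesgue measure, hence positive $m$-measure, and $k_t(x,\cdot)>0$ on it. Thus $\int_E k_t(x,y)\,m(dy)>0$ for every $x>0$, and the lemma follows.

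The only point requiring real care is the rigorous justification that the one-jump term minorises $P(t)$, together with the measurability and Fubini bookkeeping behind the displayed formula; both are supplied by the semiflow-with-jumps framework of \cite{biedrzyckatyran} (there $T_\theta x=\theta x$ and $p_\theta(x)=h(\theta)\theta$). Alternatively one may quote a general pre-Harris criterion from that paper and reduce the verification to the already-noted fact that $h$ does not vanish almost everywhere on $(0,1)$.
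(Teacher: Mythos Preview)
Your proposal is correct and follows essentially the same route as the paper: both isolate the one-jump contribution to the transition probabilities of $\{X(t)\}_{t\ge0}$, perform the change of variables $x=\theta e^{t}y$, and arrive at the identical kernel (your $k_t$ is the paper's $p$, with $\Phi_s=\psi_s$). Your positivity argument via $\Theta_+=\{h>0\}$ is more explicit than the paper's terse ``observe that'' but amounts to the same observation, and the non-explosion bookkeeping you defer to \cite{biedrzyckatyran} is handled in the paper by citing \cite[Corollary~5.3]{tyran09} for $m(y:\mathbb{P}_y(t_\infty<\infty)>0)=0$.
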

\begin{proof} Since the semigroup $\{P(t)\}_{t\ge 0}$ is stochastic, we have $m(y:\Py(t_\infty<\infty)>0)=0$ by \cite[Corollary 5.3]{tyran09}, where  $t_\infty=\lim_{n\to\infty}t_n$.
Observe that if $y$ is such that
$\Py(t_\infty<\infty)=0$, then
\[
\begin{split}
\Py(X(t)\in B)&=\sum_{n=0}^\infty \Py(X(t)\in B, t_n\le t< t_{n+1})
\end{split}
\]
and
$X(t)=e^{t-t_n}X(t_n)$ for $t\in [t_n,t_{n+1})$, $n\ge 0$.
For $n=1$ we have
\begin{multline*}
\Py(e^{t-t_1}X(t_1)\in B, t_1\leq t< t_{2})\\
=\int_{0}^1\int_0^t 1_B(e^{t }
\theta y)\psi_{t-s}(\theta e^sy)h(\theta)\theta \varphi(e^sy)\psi_s(y) ds d\theta,
\end{multline*}
where $\psi_t(y)=e^{-\int_0^t \varphi(e^ry)dr}$. The change of variables $x=e^t\theta y$ leads to
\[
\Py(e^{t-t_1}X(t_1)\in B, t_1\leq t< t_{2})=\int_Bp(x,y)xdx,
\]
where
\[
p(x,y)=1_{(0,e^ty)}(x)h\left(\frac{x}{e^ty}\right)\frac{1}{(e^ty)^2} \int_{0}^t \psi_{t-s }(e^{s-t}x)\varphi(e^sy)\psi_s(y)ds
\]
for $x,y>0$.
Hence
\[
\begin{split}
\int_B P(t)f(x)m(dx)&=\int_0^{\infty} \Py(X(t)\in B)f(y)m(dy)\\
&\ge \int_{0}^{\infty} \int_B p(x,y)m(dx) f(y)m(dy)
\end{split}
\]
for all $f\in D(m)$ and all Borel measurable sets $B$, which implies that
\[
 P(t)f(x)\ge \int_{0}^{\infty}p(x,y) f(y)m(dy),\quad f\in D(m).
\]
Observe that
\[
\int_0^\infty p(x,y)m(dy)>0\quad\text{for $m$-a.e. }x\in (0,\infty),
\]
which completes the proof.
\end{proof}

We will use the following lemma. Its proof is straightforward.

\begin{lemma}\label{l:positive_density}
Assume that $\xi$ and $\theta$ are independent random variables, where $\xi$ has a probability density function  $f_{\xi}$ on $(0,\infty)$, while $\theta$ has a density $f_{\theta}$ on $(0,1)$. Then the density $f_{\xi\theta}$ of the random variable $\xi\theta$ is given by
      \[
  f_{\xi\theta}(x)=\int_x^{\infty}f_{\theta}\left(\frac{x}{r}\right)\frac{1}{r}f_{\xi}(r) dr,\quad x>0,
  \]
      and it is positive a.e. if  $f_\xi$ is positive a.e.
\end{lemma}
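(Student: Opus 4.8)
The plan is to obtain the density of $\xi\theta$ directly from the joint law of $(\xi,\theta)$ by a single change of variables, and then to read off positivity from the fact that $f_\theta$ is a genuine probability density. First I would fix a Borel set $B\subseteq(0,\infty)$ and use independence to write
\[
\Pr(\xi\theta\in B)=\int_0^\infty\!\!\int_0^1 1_B(rt)\,f_\theta(t)\,f_\xi(r)\,dt\,dr .
\]
For each fixed $r>0$ I substitute $x=rt$ in the inner integral, so that $t=x/r$, $dt=dx/r$, and the constraint $t\in(0,1)$ becomes $x\in(0,r)$; since the integrand is nonnegative and jointly measurable, Tonelli's theorem then lets me exchange the order of integration and arrive at
\[
\Pr(\xi\theta\in B)=\int_B\left(\int_x^\infty f_\theta\!\left(\frac{x}{r}\right)\frac{1}{r}\,f_\xi(r)\,dr\right)dx .
\]
As $B$ is an arbitrary Borel subset of $(0,\infty)$ and $\xi\theta>0$ almost surely, this identifies the map $x\mapsto\int_x^\infty f_\theta(x/r)\,r^{-1}f_\xi(r)\,dr$ as a density of $\xi\theta$, which is the asserted formula.

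For the positivity claim I would argue as follows. Since $f_\theta$ integrates to $1$ over $(0,1)$, the set $A=\{t\in(0,1):f_\theta(t)>0\}$ has positive Lebesgue measure. Fix $x>0$; the map $t\mapsto x/t$ is an injection of $(0,1)$ into $(x,\infty)$ with derivative of modulus at least $x$, so it carries $A$ onto a subset of $(x,\infty)$ of positive measure. Intersecting this set with the full-measure set $\{f_\xi>0\}$ shows that $r\mapsto f_\theta(x/r)\,r^{-1}f_\xi(r)$ is strictly positive on a set of $r$'s of positive measure inside $(x,\infty)$; hence the integral defining $f_{\xi\theta}(x)$ is strictly positive. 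This in fact holds for every $x>0$, in particular almost everywhere.

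There is essentially no obstacle here: the argument is a one-variable change of variables followed by Tonelli, and the only points worth a word are the joint measurability of $(r,t)\mapsto 1_B(rt)f_\theta(t)f_\xi(r)$ and the admissibility of interchanging the two integrals, both of which are entirely standard. This is consistent with the authors' remark that the proof is straightforward.
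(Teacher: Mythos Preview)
Your argument is correct and is precisely the kind of direct computation the paper has in mind: the authors omit the proof entirely, noting only that it is straightforward, and your change-of-variables plus Tonelli derivation of the product density, followed by the positivity argument via the image of $\{f_\theta>0\}$ under $t\mapsto x/t$, is the standard route. There is nothing to add.
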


Equality in distribution will be denoted by $\eqd$.

\begin{lemma}\label{l:X_infty} 
Let $\varepsilon_n, \theta_n$, $n\in \mathbb{N}$, be sequences of independent random variables, where the
$\varepsilon_n$ are exponentially distributed with mean $1$ and the $\theta_n$ are identically distributed with
distribution function $H$ as in \eqref{d:H}. Then the random variable
\begin{equation}\label{e:X_infty_def}
X_\infty=\left(\sum_{k\ge 1}\varepsilon_k\prod_{j=1}^{k}\theta_j^\alpha\right)^{1/\alpha}
\end{equation}
is finite a.e. and it satisfies
\begin{equation}\label{e:X_infty}
 X_\infty\eqd \theta_0 (\varepsilon_0 + X_\infty^\alpha)^{1/\alpha}
\end{equation}
with independent $X_\infty$, $\theta_0$, $\varepsilon_0$, where $\theta_0\eqd \theta_1$ and $
\varepsilon_0\eqd\varepsilon_1$.
Moreover,
\begin{equation}\label{e:EX_inf^alph}
\Ex X_{\infty}^{\alpha}=\frac{\Ex\theta_0^\alpha}{1-\Ex\theta_0^\alpha},
\end{equation}
the distribution $\mu_*$ of $X_{\infty}$ is absolutely continuous with respect to $m$ with strictly positive  density $f_*$,  and it is the unique stationary distribution of the Markov chain $(X_n)_{n\ge 0}$ defined
in~\eqref{d:xi}.
\end{lemma}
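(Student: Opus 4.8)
I would work throughout with $W:=X_\infty^\alpha=\sum_{k\ge 1}\varepsilon_k\prod_{j=1}^k\theta_j^\alpha$ and with $q:=\Ex\theta_1^\alpha=\int_0^1 r^{1+\alpha}h(r)\,dr$. Since $\alpha>0$ and $\int_0^1 rh(r)\,dr=1$ with $h\ge 0$ (so $h$ is not a.e.~zero), one has $r^{1+\alpha}<r$ on the positive‑measure subset of $(0,1)$ where $h>0$, whence $q\in(0,1)$. By independence and monotone convergence, $\Ex W=\sum_{k\ge 1}\Ex\varepsilon_k\prod_{j=1}^k\Ex\theta_j^\alpha=\sum_{k\ge 1}q^k=q/(1-q)<\infty$; this gives at once that $W$, hence $X_\infty$, is finite a.e., together with the formula $\Ex X_\infty^\alpha=\Ex\theta_0^\alpha/(1-\Ex\theta_0^\alpha)$.

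For the distributional identity \eqref{e:X_infty} I would factor out the first term, $W=\theta_1^\alpha(\varepsilon_1+W')$ with $W':=\sum_{k\ge 2}\varepsilon_k\prod_{j=2}^k\theta_j^\alpha$; since $W'$ is the same functional of the shifted i.i.d.~sequences $(\varepsilon_{k+1},\theta_{k+1})_{k\ge 1}$, we get $W'\eqd W$ with $W'$ independent of $(\varepsilon_1,\theta_1)$, so $W\eqd\theta_0^\alpha(\varepsilon_0+W)$ with the three variables independent, which is \eqref{e:X_infty} after taking $\alpha$-th roots. Stationarity of $\mu_*$ for the chain \eqref{d:xi} then follows immediately: if $X_0\eqd X_\infty$ is independent of $(\theta_1,\varepsilon_1)$, then $X_1=\theta_1(\varepsilon_1+X_0^\alpha)^{1/\alpha}\eqd X_\infty$ by \eqref{e:X_infty}.

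To get absolute continuity and strict positivity of $f_*$, write $X_\infty\eqd\theta_0\,\xi$ with $\xi:=(\varepsilon_0+X_\infty^\alpha)^{1/\alpha}$ independent of $\theta_0$; by Lemma~\ref{l:positive_density} (applied to $\xi$ and $\theta_0$, whose density on $(0,1)$ is $r\mapsto h(r)r$) it suffices to show that $\xi$ has an a.e.-positive density on $(0,\infty)$. As $\varepsilon_0$ is $\mathrm{Exp}(1)$, independent of $X_\infty^\alpha\ge 0$, the law of $\xi^\alpha=\varepsilon_0+X_\infty^\alpha$ is absolutely continuous with density $g(x)=e^{-x}\int_{[0,x)}e^{z}\,\mu^\alpha_*(dz)$ for $x>0$, where $\mu^\alpha_*$ is the law of $X_\infty^\alpha$; hence $g>0$ on $(0,\infty)$ as soon as $\inf\operatorname{supp}\mu^\alpha_*=0$. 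This last fact I would read off \eqref{e:X_infty}: using $\inf\operatorname{supp}(AB)=\inf\operatorname{supp}A\cdot\inf\operatorname{supp}B$ and $\inf\operatorname{supp}(A+B)=\inf\operatorname{supp}A+\inf\operatorname{supp}B$ for independent nonnegative $A,B$, together with $\inf\operatorname{supp}\varepsilon_0=0$ and $\theta_{\min}:=\inf\operatorname{supp}\theta_1<1$ (the distribution $H$ is absolutely continuous on $(0,1)$), the identity $X_\infty^\alpha\eqd\theta_0^\alpha(\varepsilon_0+X_\infty^\alpha)$ gives $s=\theta_{\min}^\alpha\,s$ for $s:=\inf\operatorname{supp}\mu^\alpha_*$, forcing $s=0$. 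Pushing $g$ through the $C^1$-diffeomorphism $x\mapsto x^{1/\alpha}$ of $(0,\infty)$ shows $\xi$ has an everywhere positive density, and Lemma~\ref{l:positive_density} then yields $\mu_*\ll$ Lebesgue, hence $\mu_*\ll m$, with $f_*=d\mu_*/dm>0$ a.e.

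For uniqueness, set $\Phi(x)=\theta(\varepsilon+x^\alpha)^{1/\alpha}$ and take i.i.d.~copies $\Phi_1,\Phi_2,\dots$, so the chain started at $X_0$ is $X_n=\Phi_n\circ\cdots\circ\Phi_1(X_0)$. A direct expansion gives $(\Phi_1\circ\cdots\circ\Phi_n(x))^\alpha=\sum_{k=1}^n\varepsilon_k\prod_{j=1}^k\theta_j^\alpha+x^\alpha\prod_{j=1}^n\theta_j^\alpha$; as $n\to\infty$ the first sum increases to $W=X_\infty^\alpha$ and the second term tends to $0$ a.s.~(it is monotone with mean $x^\alpha q^n\to 0$), so $\Phi_1\circ\cdots\circ\Phi_n(x)\to X_\infty$ a.s., the $x$-dependence disappearing in the limit. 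If $\nu$ is any stationary distribution and $Z_0\sim\nu$ is independent of $(\Phi_j)$, then $\Phi_n\circ\cdots\circ\Phi_1(Z_0)\eqd\Phi_1\circ\cdots\circ\Phi_n(Z_0)$ because $(\Phi_1,\dots,\Phi_n)\eqd(\Phi_n,\dots,\Phi_1)$; the left-hand side has law $\nu$ by stationarity, while the right-hand side converges a.s.~to $X_\infty$ (since $Z_0<\infty$ a.s.), so $\nu=\mu_*$. The step I expect to be the most delicate is showing $f_*$ is positive \emph{everywhere}, i.e.\ $\inf\operatorname{supp}\mu^\alpha_*=0$: because $h$ may vanish near the origin, $\theta_{\min}$ need not be $0$, and it is precisely the self-referential use of \eqref{e:X_infty} that closes this gap.
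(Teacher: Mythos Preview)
Your proof is correct, and it takes a genuinely different route from the paper's. The paper introduces an auxiliary random variable $Z_\infty=\sum_{k\ge 1}\varepsilon_k\prod_{j=1}^{k-1}\theta_j^\alpha$ and imports finiteness and uniqueness of the stationary law from Vervaat and Goldie--Maller; it then relates $Z_\infty$ to $X_\infty$ via $\theta_0^\alpha Z_\infty\eqd X_\infty^\alpha$ and $Z_\infty\eqd X_\infty^\alpha+\varepsilon_0$. For strict positivity of the density the paper derives an integral equation for $f_{Z_\infty}$ and argues by contradiction that the infimum of its support is $0$. By contrast, you work directly with $W=X_\infty^\alpha$: finiteness comes from the one-line moment computation $\Ex W=\sum_k q^k<\infty$, uniqueness from the Letac-type backward iteration $\Phi_1\circ\cdots\circ\Phi_n(x)\to X_\infty$ a.s., and positivity from the clean support identity $s=\theta_{\min}^\alpha s$ forced by \eqref{e:X_infty}. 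Your argument is more self-contained and avoids external references; the paper's detour through $Z_\infty$ pays off later in Section~\ref{s:remarks}, where $f_{Z_\infty}$ is exactly the object needed to write the self-similar profile explicitly (Proposition~\ref{p:scaling}).
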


\begin{proof} It follows from \eqref{d:xi} that
\[
X_n^\alpha=\varepsilon_n\theta_n^\alpha+ X_{n-1}^\alpha \theta_n^\alpha,\quad n\ge 1.
\]
By iterating this equation we obtain
\[
X_n^\alpha=\varepsilon_n\theta_n^\alpha+\varepsilon_{n-1}\theta_{n-1}^\alpha\theta_n^\alpha+
\ldots+\varepsilon_1\prod_{j=1}^n\theta_j^\alpha+ X_{0}^\alpha\prod_{j=1}^n\theta_j^\alpha.
\]
Since $\theta_j^\alpha\in [0,1]$, $j\ge 1$, we see that the sequence $\prod_{j=1}^n\theta_j^\alpha$, being monotone, converges almost surely. In fact, it converges to zero, by the strong law of large numbers.
It is easily seen that
\[
X_n^\alpha-X_{0}^\alpha\prod_{j=1}^n\theta_j^\alpha\eqd \xi_n,\quad n\ge 1,
\]
where
\[
\xi_n=\varepsilon_1\theta_1^\alpha+\varepsilon_{2}\theta_{2}^\alpha\theta_1^\alpha+
\ldots+\varepsilon_n\prod_{j=1}^n\theta_j^\alpha,
\]
and the sequence $\xi_n$ converges almost surely to $X_\infty^\alpha$, where $X_\infty$ is as in \eqref{e:X_infty_def}. Therefore, if the Markov chain $(X_n)_{n\ge 0}$ has a stationary distribution then  it has to be the distribution of $X_\infty$.

Let the random variable $Z_{\infty}$ be defined by
\begin{equation}\label{d:Zinf}
Z_\infty=\sum_{k\ge 1}\varepsilon_k\prod_{j=1}^{k-1}\theta_j^\alpha.
\end{equation}
Note that in the right-hand side of~\eqref{d:Zinf}, we take the product to be equal to $1$ for $k =1$. Since the
random variables $\theta_j,\varepsilon_j$ are nonnegative, $Z_\infty$ is a well defined random variable with
values in $[0,\infty]$; in fact, it is finite almost surely \cite[Theorem 2.1]{goldie00} for our choice of $\theta_j,\varepsilon_j$.
Since  $-\infty\le \Ex(\log \theta_1)<0$ and $\Ex(\log (\max\{\varepsilon_1,1\})<\infty$, the series
in~\eqref{d:Zinf} converges almost surely, by \cite[Theorem 1.6]{vervaat79}, and
\begin{equation*}
Z_\infty\eqd \theta_0^\alpha Z_\infty
+\varepsilon_0,
\end{equation*}
where $\theta_0,\varepsilon_0,Z_\infty$ are independent with
$\theta_0\eqd\theta_1$, $\varepsilon_0\eqd \varepsilon_1$.
The Markov chain $(Z_n)_{n\ge 0}$ defined by
\[
Z_n=\theta_n^\alpha Z_{n-1}+\varepsilon_n,\quad n\ge 1,
\]
has a unique stationary distribution, by \cite[Theorem 1.5]{vervaat79}, which is the distribution of $Z_\infty$.
Now, observe that
\[
\theta_0^\alpha Z_\infty\eqd X_\infty^\alpha.
\]
Moreover, the random variable $ X_\infty^\alpha +\varepsilon_0$ has the same
distribution as the random variable $Z_\infty$. Consequently, the distribution of $X_\infty$ is the unique stationary distribution of the Markov chain $(X_n)_{n\ge 0}$.

Next observe that the distribution of $Z_\infty$, being a convolution of two distributions one of which  is absolutely continuous with respect to the Lebesgue measure, is  absolutely continuous. Hence, it has a probability density function $f_{Z_\infty}$.
Since $X_\infty\eqd \theta_0 Z_\infty^{1/\alpha}$ and the probability density function of $Z_\infty^{1/\alpha}$ is given by $\alpha x^{\alpha-1}f_{Z_\infty}(x^\alpha)$, the random variable $X_\infty$ also has a probability density function $f_{X_\infty}$, which implies that $f_*(x)x=f_{X_\infty}(x)$ for $x>0$.
To show that $f_*$ is positive a.s. it is enough to show, by Lemma \ref{l:positive_density}, that $f_{Z_\infty}$ is positive a.e.
Since $Z_\infty\eqd X_\infty^\alpha+\varepsilon_0$, we have
\[
f_{Z_\infty}(x)=\int_0^x e^{-(x-y)}f_{X_\infty^\alpha}(y)dy
\]
which shows that $f_{Z_\infty}$ is positive on an interval $(x_0,\infty)$, where $x_0\ge 0$.  To complete the proof, it remains to show that $x_0=0$. From  \eqref{d:Zinf} it follows that  $f_{Z_\infty}(x)$ satisfies the following equation
\[
f_{Z_\infty}(x)=\int_{0}^x\int_{z}^{\infty}g_\alpha\Bigl(\frac{z}{y}\Bigr)\frac{1}{y}f_{Z_\infty}(y)dy
e^{-(x-z)}dz,
\]
where $g_\alpha$ is the probability density function of the random variable $\theta_0^\alpha$. By changing the order of integration, we obtain
\[
f_{Z_\infty}(x)=\int_{0}^\infty\int_{0}^{\min\{x,y\}}g_\alpha\Bigl(\frac{z}{y}\Bigr)\frac{1}{y}
e^{-(x-z)}dz f_{Z_\infty}(y)dy.
\]
Suppose that $x_0>0$. Then for every
$x<x_0$ and  every $y>x_0$ we obtain
\[
\int_{0}^{x}g_\alpha\Bigl(\frac{z}{y}\Bigr)\frac{1}{y}e^{-(x-z)}dz=0
\]
This implies that for every $r<1$ we have $ \int_{0}^{r}g_\alpha(t)dt=0, $ which contradicts the fact that
$\int_{0}^{1} g_\alpha(t)dt=1$ and shows that $x_0=0$.
Consequently, the density $f_*$  is positive a.e.

Finally, observe that we have $0\le \Ex\theta_0^{\alpha}<1$. In fact, since $\theta_0^{\alpha}\in[0,1]$, we have $\Ex\theta_0^{\alpha}\in[0,1]$. Suppose that $\Ex\theta_0^{\alpha}=1$. Then $\theta_0^{\alpha}=1$ a.e., which implies that $\theta_0=1$ a.e. Hence $\theta_0$ has no density, which leads to a contradiction.
From \eqref{e:X_infty_def} we calculate
\begin{equation*}
  \Ex X_{\infty}^{\alpha}=\sum_{k\ge 1}\Ex\varepsilon_k\prod_{j=1}^{k}\Ex\theta_j^\alpha=\sum_{k\ge 1}\prod_{j=1}^{k}\Ex\theta_0^\alpha =\sum_{k\ge 1}\left(\Ex\theta_0^\alpha\right)^k = \frac{\Ex\theta_0^\alpha}{1-\Ex\theta_0^\alpha},
\end{equation*}
which gives \eqref{e:EX_inf^alph} and completes the proof.
\end{proof}

Lemmas~\ref{l:preH} and \ref{l:X_infty} imply that all assumptions of Theorem~\ref{t:contf} hold. Theorem~\ref{thm:asc} now follows by combining Theorem~\ref{t:contf} and the next lemma.

\begin{lemma}\label{l:Ex_t1}
The random variable $t_1$ in \eqref{eq:t1} satisfies \eqref{eq:R0v}
if and only if $\Ex\log\theta_1>-\infty$, in which case
\[
\Ex (t_1)=\Ex(-\log\theta_1)=-\int_0^1\log z\, h(z)zdz.
\]
\end{lemma}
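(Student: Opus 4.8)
The plan is to rewrite the quantity in \eqref{eq:R0v} as an expectation involving $X_\infty$ and then feed in the fixed-point identity \eqref{e:X_infty}. Under $\Px$ the first jump time is $t_1=\frac1\alpha\log\!\big(1+\varepsilon_1 x^{-\alpha}\big)$ with $\varepsilon_1$ exponential of mean one, so $\Exx(t_1)=\frac1\alpha\,\Ex\log\!\big(1+\varepsilon_1 x^{-\alpha}\big)$. Since Lemma~\ref{l:X_infty} identifies $f_*(x)\,m(dx)$ with the law of $X_\infty$ (that is, $f_*(x)x=f_{X_\infty}(x)$), Tonelli's theorem gives
\[
\Ex(t_1)=\frac1\alpha\,\Ex\log\!\Big(1+\frac{\varepsilon}{X_\infty^\alpha}\Big)=\frac1\alpha\,\Ex\big(\log W-\log X_\infty^\alpha\big),
\]
where $\varepsilon\eqd\varepsilon_1$ is taken independent of $X_\infty$ and $W:=\varepsilon+X_\infty^\alpha$. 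Because $W\ge X_\infty^\alpha$, the integrand is nonnegative, so $\Ex(t_1)$ is always well defined in $[0,\infty]$.

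Before computing I would record two auxiliary facts. First, $\Ex|\log W|<\infty$: on one hand $W\ge\varepsilon$ gives $\Ex\log^-W\le\Ex\log^-\varepsilon<\infty$; on the other, $\log^+W\le\log2+\log^+\varepsilon+\log^+X_\infty^\alpha\le\log2+\varepsilon+X_\infty^\alpha$, which is integrable since $\Ex X_\infty^\alpha<\infty$ by \eqref{e:EX_inf^alph}. Second, raising \eqref{e:X_infty} to the power $\alpha$ yields $X_\infty^\alpha\eqd\theta_0^\alpha\,(\varepsilon_0+\widehat X_\infty^{\,\alpha})$ with $\theta_0,\varepsilon_0,\widehat X_\infty$ independent and $\widehat X_\infty\eqd X_\infty$; here $\varepsilon_0+\widehat X_\infty^{\,\alpha}\eqd W$ and is independent of $\theta_0$, so taking logarithms and expectations — legitimate because $\Ex|\log W|<\infty$ and $\log\theta_0^\alpha\le0$ — gives the identity
\[
\Ex\log X_\infty^\alpha=\Ex\log\theta_0^\alpha+\Ex\log W
\]
as an equality in $[-\infty,\infty)$.

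Plugging this into the first display and using the finiteness of $\Ex\log W$ to split the difference, I obtain
\[
\alpha\,\Ex(t_1)=\Ex\log W-\Ex\log X_\infty^\alpha=-\Ex\log\theta_0^\alpha=-\alpha\,\Ex\log\theta_1 ,
\]
hence $\Ex(t_1)=\Ex(-\log\theta_1)=-\int_0^1\log z\,h(z)z\,dz$ by \eqref{d:H}. In particular $\Ex(t_1)<\infty$ exactly when $\Ex(-\log\theta_1)<\infty$, i.e.\ when $\Ex\log\theta_1>-\infty$, which is \eqref{eq:flm}; and when $\Ex\log\theta_1=-\infty$ both sides above are $+\infty$, consistent with $\Ex(t_1)=\infty$.

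The only point requiring care is the arithmetic of possibly infinite expectations: $\Ex\log\theta_1$ and $\Ex\log X_\infty^\alpha$ may be $-\infty$, so one must never form $\infty-\infty$. The finiteness $\Ex|\log W|<\infty$ — which ultimately rests on $\Ex X_\infty^\alpha<\infty$ from \eqref{e:EX_inf^alph} — is exactly what makes the additive identity for $\Ex\log X_\infty^\alpha$ and the subtraction in the last display meaningful, with both sides lying in $(-\infty,\infty]$. I expect this bookkeeping to be the only (and rather mild) obstacle; everything else is direct substitution into \eqref{e:X_infty} and \eqref{d:H}.
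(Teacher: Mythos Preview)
Your proof is correct and follows essentially the same route as the paper's: both exploit the fixed-point identity \eqref{e:X_infty} together with $\Ex X_\infty^\alpha<\infty$ from \eqref{e:EX_inf^alph} to reduce $\Ex(t_1)$ to $-\Ex\log\theta_1$. The paper establishes the equivalence via separate sandwich bounds on $t_1$ and on $\log X_\infty^\alpha$ (so that the cancellation $\Ex\log X_\infty-\Ex\log X_\infty$ is only performed in the finite case), whereas your single observation that $\Ex|\log W|<\infty$ for $W=\varepsilon+X_\infty^\alpha$ serves as a cleaner pivot that yields the identity $\Ex(t_1)=-\Ex\log\theta_1$ directly as an equality in $[0,\infty]$, giving the computation and the ``iff'' at once.
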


\begin{proof} We have $t_1= \frac{1}{\alpha}\log\frac{\varepsilon_1+X_{0}^{\alpha}}{X_{0}^{\alpha}}$, where $\varepsilon_1$ and $X_0$ are independent.
This leads to
\[
\frac{1}{\alpha}\log\varepsilon_1-\log X_{0} \leq t_1 \leq \frac{1}{\alpha}(\varepsilon_1+X_{0}^{\alpha}-1)-\log X_{0}.
\]
To calculate the first moment of $t_1$ we  take  $X_0\eqd X_\infty$. 
Since $X_0^\alpha\eqd X_{\infty}^{\alpha}$ is integrable by \eqref{e:EX_inf^alph} and $\Ex\log\varepsilon_1=-\gamma$, where $\gamma$ is the Euler--Mascheroni constant,
we obtain that $t_1$ has a finite first moment if and only if $|\Ex\log X_{\infty}|<\infty$.
We have
\begin{equation*}
  \begin{split}
     \Ex(t_1)&=\int_{0}^{\infty}\int_0^{\infty} \log\left(\frac{y}{x^{\alpha}}+1\right)^{\frac{1}{\alpha}}e^{-y} dy f_*(x) m(dx) \\
       & = \int_0^1\int_0^{\infty}\int_0^{\infty}\left(\log\theta(y+x^{\alpha})^{\frac{1}{\alpha}}-\log x-\log\theta\right)e^{-y} dy f_*(x) m(dx) d\theta\\
       & =\Ex\log X_{\infty}-\Ex\log X_{\infty} -\Ex\log\theta_1 = -\Ex\log\theta_1.
   \end{split}
\end{equation*}
Moreover, from \eqref{e:X_infty} it follows that
\begin{equation*}
      \log X_{\infty}^{\alpha} \eqd  \alpha\log\theta_0+\log\varepsilon_0+\log\left(1+\frac{X_{\infty}^{\alpha}}{\varepsilon_0}\right)
         \geq \alpha\log\theta_0+\log\varepsilon_0.
\end{equation*}
On the other hand,
\begin{equation*}
     \log X_{\infty}^{\alpha} \eqd \alpha\log\theta_0+\log(\varepsilon_0+X_{\infty}^{\alpha})
        \leq \alpha\log\theta_0+\varepsilon_0+X_{\infty}^{\alpha}-1.
\end{equation*}
Hence
\[
\Ex\log\theta_0+\frac{1}{\alpha}\Ex\log\varepsilon_0\leq\Ex\log X_{\infty}\leq \Ex\log\theta_0+\frac{1}{\alpha}\Ex X_{\infty}^{\alpha},
\]
which implies that  $\Ex\log\theta_1>-\infty$ if and only if $|\Ex\log X_{\infty}|<\infty$ and completes the proof.
\end{proof}

We conclude this section with a construction
of the jump Markov process $\{Y(t)\}_{t\ge 0}$ corresponding to the fragmentation equation  \eqref{eq:emass} with $a(x)=x^\alpha$  and $\alpha<0$. The sample path of $Y(t)$ starting at $Y(0)=Y_0$ is defined as in  \eqref{d:Yt} as long as $t\in [\tau_n,\tau_{n+1})$ for some $n$. Since
\[
Y_0^\alpha \tau_n=Y_0^\alpha \sum_{k=1}^n\frac{\varepsilon_k}{Y_{k-1}^\alpha}= \sum_{k=1}^n \varepsilon_k \prod_{j=1}^{k-1}\theta_j^{-\alpha},\quad n\ge 1,
\]
and $-\alpha>0$, we see, as in the proof of Lemma~\ref{l:X_infty}, that the limit
\[
\lim_{n\to\infty}Y_0^\alpha \tau_n=\sum_{k=1}^\infty \varepsilon_k \prod_{j=1}^{k-1}\theta_j^{-\alpha}=:I_\infty
\]
exists and is finite a.s.  Thus the explosion time of the process, being defined by
$
\tau_\infty=\lim_{n\to\infty}\tau_n,
$
is finite a.s. The sequence $Y_n$, $n\ge 0$, is non-increasing and converging to $0$ a.s. Consequently, we can set $Y(t)=0$ for $t\ge \tau_\infty$ and say that $Y_0^{-\alpha}I_\infty$ is the first time when $Y(t)$ reaches $0$. Observe that  we have
\[
Y(t)=(1+t)^{-1/\alpha} X(\log(1+t)^{-1/\alpha}),\quad t\ge 0,
\]
where the corresponding piecewise deterministic Markov process $\{X(t)\}_{t\ge 0}$ has characteristics $(\pi,\varphi,\mathcal{P})$ with
\[
\varphi(x)=|\alpha|x^\alpha, \quad \pi_tx=e^{-t}x,\quad x>0,t\ge 0,
\]
and the evolution equation as in \cite[Section 6.3]{tyran09} or \cite[Section 4]{arinorudnicki04}.

Another representation of $\{Y(t)\}_{t\ge 0}$ is as follows. Let $\{Z(t)\}_{t\ge 0}$ be a compound Poisson process of the form
\[
Z(t)=-\sum_{j=1}^{N(t)}\log \theta_j,\quad t>0,
\]
where  $\{N(t)\}_{t\ge 0}$ is a Poisson process  with jump times
$\tilde{\tau}_n=\sum_{j=1}^n\varepsilon_j$, $n\ge 1$.
Then  $\{Y(t)\}_{t\ge 0}$ can be represented in the form \cite{haas10}
\[
Y(t)=Y_0e^{-Z(\rho(Y_0^\alpha t))},\quad t\ge 0,
\]
where $\rho$ is the time-change given by
\[
\rho(t)=\inf\{r\ge 0:\int_{0}^{r}e^{\alpha Z(s)}ds>t\},\quad t\ge 0.
\]
Note that
\[
\int_{0}^{\rho(t)}e^{\alpha Z(s)}ds=t
\]
if and only if $t<I_\infty$, and $\rho(t)=+\infty$ otherwise.
The random variable $I_\infty$ is an example of the so-called exponential functional (see e.g. \cite{carmonapetityor97})
\[
I_\infty=\int_0^\infty e^{\alpha Z(s)}ds.
\]
This can be easily seen by noting that $N(s)=k$ for $t\in[\tilde{\tau}_k,\tilde{\tau}_{k+1})$ with $\tilde{\tau}_0:=0$,
$\varepsilon_{k+1}=\tilde{\tau}_{k+1}-\tilde{\tau}_k$, $k\ge 0$, and
\[
\begin{split}
\int_{0}^{\infty}e^{\alpha Z(s)}ds&=\sum_{k\ge 0}\int_{\tilde{\tau}_k}^{\tilde{\tau}_{k+1}}\prod_{j=1}^{k}\theta_j^{-\alpha} ds
= \tilde{\tau}_1+
\sum_{k\ge 1} (\tilde{\tau}_{k+1}-\tilde{\tau}_k)\prod_{j=1}^{k}\theta_j^{-\alpha}=I_\infty.
\end{split}
\]
To get finiteness of $I_\infty$ in terms of pathwise properties of the  process $\{Z(t)\}_{t\ge 0}$, one can simply assume that $\Ex(Z(1))\in (0,\infty)$, which is equivalent to \eqref{eq:flm}.

\section{Examples and final remarks}\label{s:remarks}
We have proved that the homogeneous fragmentation equation has a self-similar solution if and only if $\Ex(\log\theta_1)>-\infty$ where $\theta_1$ is a random variable with distribution function $H$ as in
\eqref{d:H}.  In that case the growth fragmentation equation has an integrable stationary solution.
Here we give the formula for the stationary solution in terms of the probability density function of the random variable  $Z_\infty$ as defined in \eqref{d:Zinf}.

Using the notation of \cite{biedrzyckatyran} note that the stochastic transition kernel $\mathcal{K}$ of
the Markov chain $(X_n)_{n\ge 0}$ defined in \eqref{d:xi}
is of the form
\[
\mathcal{K}(x,B)=\int_{0}^\infty \mathcal{P} (e^sx,B)\varphi(e^sx)e^{-\int_{0}^s\varphi(e^rx)dr}ds, \quad x>0, B\in\B(0,\infty),
\]
see e.g. \cite[Theorem 3.14]{biedrzyckatyran}, where $\varphi(x)=\alpha x^\alpha$ and $\mathcal{P}$ is the stochastic transition kernel defined in \eqref{eq:Jfr}.
We have
\begin{equation}\label{eq:KL}
\int_{0}^\infty \mathcal{K}(x,B)
f(x)m(dx)=\int_{0}^\infty\mathcal{P}(x,B)\varphi(x)R_0 f(x)m(dx)
\end{equation}
for all $B\in\B(0,\infty)$ and $f\in D(m)$, where
\begin{equation*}\label{d:Ro}
R_0 f(x)=\int_{0}^\infty e^{-2s} e^{(e^{-s}x)^\alpha -x^\alpha} f(e^{-s}x)ds,\quad x> 0, f\in D(m).
\end{equation*}
Since $P$ in \eqref{eq:MoPf} is the transition operator corresponding to $\mathcal{P}$, we obtain, by \eqref{eq:KL} and \eqref{d:top},
\[
\int_{0}^\infty\mathcal{K}(x,B)f(x)m(dx)=\int_{B} P(\varphi R_0 f)(x)m(dx),\quad B\in\B(0,\infty),  f\in D(m).
\]
Hence the stochastic operator $K$,  being the transition operator on $L^1$ corresponding to $\mathcal{K}$, is given by
\[
Kf=P(\varphi R_0 f), \quad f\in D(m).
\]
Note that $f_*$ in Lemma~\ref{l:X_infty} is the unique invariant density of the stochastic operator $K$, thus
\[
f_*=P(\varphi \overline{f}_*), \quad \text{where } \overline{f}_*=R_0 f_*.
\]
It follows from \cite[Theorem 3.3, Proposition 3.8]{biedrzyckatyran} that $\overline{f}_*$ is subinvariant  for the semigroup $\{P(t)\}_{t\ge 0}$.  We have $\overline{f}_*>0$ a.e., by Lemma~\ref{l:X_infty} and \cite[Corollary 3.4]{biedrzyckatyran} and the semigroup $\{P(t)\}_{t\ge 0}$ can have at most one invariant density, by~\cite[Theorem 3.15]{biedrzyckatyran}.
Note that $\overline{f}_*$ might not be integrable, but taking $B=(0,\infty)$ and $f=f_*$ in \eqref{eq:KL} shows that
\begin{equation}\label{e:integ}
\int_{0}^\infty\varphi(x)\overline{f}_*(x)m(dx)=\int_{0}^\infty f_*(x)m(dx)=1.
\end{equation}
From the proof of \cite[Theorem 3.15]{biedrzyckatyran} it follows that $\Ex(t_1)=\|\overline{f}_*\|$. Thus $\|\overline{f}_*\|=\Ex(-\log\theta_1)$, by Lemma~\ref{l:Ex_t1}.

On the other hand, $f_*(x)x$ is the probability density function of the random variable $X_\infty\eqd \theta_0 Z_\infty^{1/\alpha}$ with $\theta_0\eqd \theta_1$ and the operator $P$ corresponds to a multiplication by $\theta_1$.
Thus the probability density function of $Z_\infty^{1/\alpha}$ satisfies
\begin{equation*}
\alpha x^{\alpha-1}f_{Z_\infty}(x^\alpha)=\varphi(x) \overline{f}_*(x)x.
\end{equation*}
Consequently, if $\Ex(-\log\theta_1)<\infty$
then  the semigroup $\{P(t)\}_{t\ge 0}$ has a unique invariant density  $u_*$ and it is given by
\begin{equation*}\label{e:ustar}
u_*(x)=\frac{\overline{f}_*(x)}{\|\overline{f}_*\|}=\frac{f_{Z_\infty}(x^\alpha)}{\Ex(-\log\theta_1)x^2}.
\end{equation*}
We have proved the following.
\begin{proposition}\label{p:scaling} If \eqref{eq:flm} holds, equivalently $\Ex(-\log\theta_1)<\infty$,  then the self-similar solution of equation \eqref{eq:emass}
in Theorem~\ref{thm:asc}  is of the form
\[
c_*(t,x)=\frac{f_{Z_\infty}((1+t)x^\alpha)}{\Ex(-\log\theta_1)x^2},\quad x,t>0,
\]
where $f_{Z_\infty}$ is the probability density function of the random variable  $Z_\infty$ in~\eqref{d:Zinf}.
\end{proposition}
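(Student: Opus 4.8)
The plan is to pin down the invariant density $u_*$ of the stochastic semigroup $\{P(t)\}_{t\ge 0}$ in closed form and then feed it into the scaling relation \eqref{eq:sss}. By Theorem~\ref{thm:asc} and the reduction carried out in Section~\ref{s:markov}, the self-similar solution is $c_*(t,x)=\gamma(t)^2u_*(\gamma(t)x)$ with $\gamma(t)=(1+t)^{1/\alpha}$, so the whole task comes down to the identification of $u_*$.

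First I would connect $u_*$ to the invariant density $f_*$ of the embedded chain $(X_n)_{n\ge0}$ supplied by Lemma~\ref{l:X_infty}. Writing the one-step transition kernel $\mathcal{K}$ of $(X_n)$ explicitly (integrating the jump rate $\varphi(x)=\alpha x^\alpha$ along the flow $\pi_t x=e^t x$), I would show that the associated transition operator $K$ on $L^1$ factors as $Kf=P(\varphi R_0 f)$, where $R_0 f(x)=\int_0^\infty e^{-2s}e^{(e^{-s}x)^\alpha-x^\alpha}f(e^{-s}x)\,ds$. Then $Kf_*=f_*$ becomes $f_*=P(\varphi\,\overline{f}_*)$ with $\overline{f}_*:=R_0 f_*$. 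Invoking the semiflow-with-jumps machinery of \cite{biedrzyckatyran}, $\overline{f}_*$ is subinvariant for $\{P(t)\}_{t\ge0}$ and strictly positive a.e., and since that semigroup admits at most one invariant density, $u_*$ must coincide with $\overline{f}_*/\|\overline{f}_*\|$ — provided $\overline{f}_*$ is integrable.

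The quantitative heart of the argument is computing the two pieces of this ratio. For the normalizing constant I would use $\|\overline{f}_*\|=\Ex(t_1)$, and this is where \eqref{eq:flm} enters: by Lemma~\ref{l:Ex_t1} it gives $\Ex(t_1)=\Ex(-\log\theta_1)=-\int_0^1\log z\,h(z)z\,dz<\infty$, which is precisely what makes $\overline{f}_*\in L^1$. For $\overline{f}_*$ itself I would proceed probabilistically: $f_*(x)x$ is the Lebesgue density of $X_\infty\eqd\theta_0 Z_\infty^{1/\alpha}$, and since $P$ transforms densities by multiplying the underlying variable by $\theta_1$, the identity $f_*=P(\varphi\,\overline{f}_*)$ forces $\varphi(x)\overline{f}_*(x)x$ to be the Lebesgue density of $Z_\infty^{1/\alpha}$, namely $\alpha x^{\alpha-1}f_{Z_\infty}(x^\alpha)$. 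Solving, $\overline{f}_*(x)=x^{-2}f_{Z_\infty}(x^\alpha)$, and dividing by $\|\overline{f}_*\|$ yields $u_*(x)=f_{Z_\infty}(x^\alpha)/(\Ex(-\log\theta_1)x^2)$.

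To finish, I would substitute into \eqref{eq:sss}: since $(\gamma(t)x)^\alpha=(1+t)x^\alpha$ and $(\gamma(t)x)^2=(1+t)^{2/\alpha}x^2$, the prefactor $\gamma(t)^2=(1+t)^{2/\alpha}$ cancels exactly, leaving the stated formula for $c_*$. I expect the main obstacle to be the structural step rather than any computation — namely, justifying that $u_*$ genuinely equals $\overline{f}_*/\|\overline{f}_*\|$ (not merely dominates a multiple of it) and that $\overline{f}_*$ is integrable, both of which must be extracted carefully from the uniqueness results of \cite{biedrzyckatyran} together with the equivalence in Lemma~\ref{l:Ex_t1}; the probabilistic reading of $P$ as a $\theta_1$-multiplication also has to be set up cleanly to avoid Jacobian slips when passing between $Z_\infty$ and $Z_\infty^{1/\alpha}$.
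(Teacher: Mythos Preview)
The proposal is correct and follows essentially the same route as the paper: factor the embedded-chain transition operator as $Kf=P(\varphi R_0 f)$ with the very same $R_0$, set $\overline{f}_*=R_0 f_*$, invoke the subinvariance and uniqueness results of \cite{biedrzyckatyran} together with Lemma~\ref{l:Ex_t1} to get $u_*=\overline{f}_*/\|\overline{f}_*\|$ with $\|\overline{f}_*\|=\Ex(t_1)=\Ex(-\log\theta_1)$, identify $\varphi(x)\overline{f}_*(x)x$ probabilistically as the density $\alpha x^{\alpha-1}f_{Z_\infty}(x^\alpha)$ of $Z_\infty^{1/\alpha}$, and substitute into \eqref{eq:sss}. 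The structural worry you highlight --- that subinvariance plus integrability of $\overline{f}_*$ really forces $u_*=\overline{f}_*/\|\overline{f}_*\|$ --- is handled exactly as you anticipate, since a normalized nonnegative subinvariant function for a stochastic semigroup is automatically invariant.
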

This form of the self-similar solution should be compared with the scaling assumption  in \cite[equation (15)]{ziff91}, where the function $\Phi$ corresponds to $f_{Z_\infty}$.

We now give an exactly solvable example, known since the \cite{peterson86}.
\begin{example}\label{ex:g} Consider the function
$h(z)=\beta z^{\beta-2}$ with $\beta>0$. Then $\theta_1\eqd U^{1/\beta}$, where $U$ is a random variable with uniform
distribution on $[0,1]$. Since $Z_\infty$ has the gamma distribution with shape parameter $1+\beta/\alpha$ (see e.g.~\cite[Example 3.8]{vervaat79}), we have
\[
f_{Z_\infty}(x)=\frac{1}{\Gamma(1+\beta/\alpha)}x^{\beta/\alpha}e^{-x}, \quad x>0,
\]
where $\Gamma$ is the gamma function. This implies that
\[
u_*(x)=\frac{\beta}{\Gamma(1+\beta/\alpha)}x^{\beta-2}e^{-x^\alpha}
=\frac{\alpha}{\Gamma(\beta/\alpha)}x^{\beta-2}e^{-x^\alpha}.
\]
Note that $u_*(x)x$ is the probability density function of the generalized gamma distribution with parameters $(\alpha,\beta,1)$.
\end{example}

The random variable $Z_\infty$ can be represented as the exponential functional
\[
Z_\infty\eqd \int_{0}^{\infty}e^{-\alpha Z(t)}dt
\]
of the compound Poisson process $\{Z(t)\}_{t\ge 0}$ defined above.
The Laplace exponent $\phi(q)$ of $\{Z(t)\}_{t\ge 0}$, which is defined by
\[
\Ex(e^{-q Z(t)})=e^{-t\phi(q)},\quad t>0,
\]
is of the form $$\phi(q)=\Ex(1-\theta_1^{ q})=\int_0^1(1-z^{ q})zh(z)dz,\quad q>0.$$
Hence, \cite[Proposition 3.3]{carmonapetityor97} implies that the random variable $Z_\infty$ is determined by its moments
\begin{equation*}\label{e:moments}
\Ex(Z_\infty^n)=\frac{n!}{\prod_{k=1}^{n}\phi(\alpha k)}=\frac{n!}{\prod_{k=1}^{n}\Ex(1-\theta_1^{\alpha k})},\quad n=1,2,\ldots.
\end{equation*}

We next give two examples where the random variable $Z_\infty$ can be identified through its moments.

\begin{example}
Recall that a random variable $\theta$ has a beta distribution with parameters $(a,b)$, $a,b>0$, if its probability density function is
\[
f_\theta(x)=\frac{\Gamma(a+b)}{\Gamma(a)\Gamma(b)}x^{a-1}(1-x)^{b-1},\quad x\in (0,1).
\]
If $\theta_1$ is a product of two independent random variables  with beta distributions with parameters $(\beta_1,1)$ and $(\beta_2,1)$, then $\Ex(-\log \theta_1)=1/\beta_1+1/\beta_2$ and
$Z_\infty$ is a product of two independent random variables, one is beta distributed with parameters $(1+a_1,a_2)$ and the other has a gamma distribution with shape parameter $1+a_2$,
where
\begin{equation}\label{d:a}
a_1=\frac{\beta_1}{\alpha}, \quad a_2=\frac{\beta_2}{\alpha}.
\end{equation}
\end{example}

\begin{remark}\label{r:dens}
It is easily seen that if $Z_\infty=\theta\xi$ where $\theta$ and $\xi$ are independent random  variables,  $\theta$ has a beta distribution with parameters $(1+a_1,a)$ and $\xi$ has a gamma distribution with shape parameter $1+a_2$, then the probability density function of $Z_\infty$ is of the form
\[
f_{Z_\infty}(x)=\frac{\Gamma(1+a_1+a)}{\Gamma(1+a_1)\Gamma(1+a_2)}e^{-x}x^{a_2}U(a,1+a_2-a_1,x),\quad x>0,
\]
where $U$ is the confluent hypergeometric function of the second kind
\[
U(a,b,x)=\frac{1}{\Gamma(a)}\int_{0}^\infty e^{-xs}s^{a-1}(1+s)^{b-a-1}ds, \quad a,x>0,
b\in\mathbb{R}.
\]
\end{remark}

\begin{example}
As in \cite{ziff91} consider now the function
\[h(z)=p\beta_1 z^{\beta_1-2}+(1-p)\beta_2 z^{\beta_2-2},\quad z\in (0,1), \] where $\beta_1,\beta_2>0$, $p\in [0,1]$. We can assume that $\beta_2>\beta_1$ and $p>0$. Then $p(a_2-a_1)>0$, where $a_1,a_2$ are as in \eqref{d:a},  $\Ex(-\log\theta_1)=p/\beta_1+(1-p)/\beta_2$,
and
$Z_\infty$ is the product of two independent random variables, one is beta distributed with parameters $(1+a_1,p(a_2-a_1))$ and the other has a gamma distribution with parameter $1+a_2$. Remark~\ref{r:dens} and Proposition~\ref{p:scaling} allow us to recover the scaling solutions from \cite{ziff91}.
\end{example}

We conclude the paper by commenting on the behaviour of  solutions of the fragmentation equation when $\Ex(\log \theta_1)=-\infty$.
\begin{proposition}\label{p:sweeping}
If \eqref{eq:flm} does not hold then, for every solution $c$ of~\eqref{eq:emass} with initial condition
$c_0\in D(m)$, we have
\begin{equation*}
\lim_{t\to\infty}\int_{x_0/\gamma(t)}^\infty c(t,x)x\,dx=0\quad \text{for all }x_0>0.
\end{equation*}
\end{proposition}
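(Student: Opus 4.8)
The plan is to read off from the representation \eqref{eq:eq} that the assertion is equivalent to a sweeping property of the stochastic semigroup $\{P(t)\}_{t\ge 0}$ and then to deduce that property from Proposition~\ref{p:sweep}. First I would change variables $z=\gamma(t)x$ in \eqref{eq:eq}, which yields
\[
\int_{x_0/\gamma(t)}^\infty c(t,x)\,x\,dx=\int_{x_0}^\infty P(\log\gamma(t))c_0(z)\,z\,dz=\int_{[x_0,\infty)} P(\log\gamma(t))c_0\,dm .
\]
Since $\log\gamma(t)\to\infty$ as $t\to\infty$, it suffices to prove that $\lim_{s\to\infty}\int_B P(s)f\,dm=0$ for every $f\in D(m)$ and every $B$ in the family $\E_*=\{B\in\E:\inf B>0\}$ of subsets of $E$ bounded away from the origin, because $[x_0,\infty)\in\E_*$ for each $x_0>0$.

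I would then verify, for the operator $P(1)$, the hypotheses of Proposition~\ref{p:sweep}. By Lemma~\ref{l:preH} the operator $P(1)$ is pre-Harris. The function $\overline{f}_*=R_0 f_*$ is subinvariant for the semigroup $\{P(t)\}_{t\ge0}$, hence for $P(1)$, and $\overline{f}_*>0$ a.e., as recorded in Section~\ref{s:remarks}. Moreover $\overline{f}_*$ is locally integrable with respect to $\E_*$: by \eqref{e:integ} and $\varphi(x)=\alpha x^\alpha$ one has $\int_0^\infty x^\alpha\overline{f}_*(x)\,m(dx)=1/\alpha$, so for every $x_0>0$
\[
\int_{[x_0,\infty)}\overline{f}_*\,dm\le x_0^{-\alpha}\int_{x_0}^\infty x^\alpha\overline{f}_*(x)\,m(dx)\le\frac{1}{\alpha x_0^{\alpha}}<\infty .
\]

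The remaining hypothesis is that $P(1)$ has no invariant density. First, $\{P(t)\}_{t\ge0}$ has no invariant density: it has at most one, and that one would necessarily equal $\overline{f}_*/\|\overline{f}_*\|$ (see Section~\ref{s:remarks}), which is impossible because $\|\overline{f}_*\|=\Ex(t_1)=+\infty$, the latter holding since \eqref{eq:flm}, equivalently \eqref{eq:R0v}, fails by hypothesis and Lemma~\ref{l:Ex_t1}. Hence also $P(1)$ has no invariant density: if $P(1)g=g$ for some $g\in D(m)$, then, as $P(1)$ commutes with every $P(r)$, we get $P(1)\big(P(r)g\big)=P(r)g$ for all $r\ge0$, so $\tilde g:=\int_0^1 P(r)g\,dr\in D(m)$ is invariant for $\{P(t)\}_{t\ge0}$, a contradiction.

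Proposition~\ref{p:sweep} then gives that $P(1)$ is sweeping with respect to $\E_*$, that is, $\lim_{n\to\infty}\int_B P(1)^n f\,dm=0$ for all $f\in D(m)$, $B\in\E_*$. To pass to continuous time, write $s=n+r$ with $n=\lfloor s\rfloor$, $r\in[0,1)$; then $\int_B P(s)c_0\,dm=\int_B P(1)^n\big(P(r)c_0\big)\,dm$, and since $\{P(r)c_0:r\in[0,1]\}$ is compact in $L^1$ while the maps $g\mapsto\int_B P(1)^n g\,dm$ are $1$-Lipschitz on $L^1$ uniformly in $n$, the convergence is uniform in $r$; thus $\int_B P(s)c_0\,dm\to0$ as $s\to\infty$, and together with the first display this proves the Proposition. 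I expect the only non-routine point to be recognizing the correct family $\E_*$ and using \eqref{e:integ} to see that $\overline{f}_*$, although not globally integrable once \eqref{eq:flm} fails, is integrable on every set bounded away from $0$; granting that, together with the absence of an invariant density, Proposition~\ref{p:sweep} and the routine passage from $P(1)$ to the semigroup finish the argument.
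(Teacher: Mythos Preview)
Your proof is correct and follows essentially the same route as the paper: reduce, via the change of variables in \eqref{eq:eq}, to showing that $\{P(t)\}_{t\ge0}$ is sweeping on sets bounded away from $0$, and then obtain this from Proposition~\ref{p:sweep} applied to a time-$1$ map, using Lemma~\ref{l:preH} for pre-Harris, the subinvariant positive $\overline{f}_*$, and its integrability on $(x_0,\infty)$ via \eqref{e:integ}. The only difference is cosmetic: where the paper cites \cite[Proposition~7.12.1]{almcmbk94} to pass from ``no invariant density for the semigroup'' to ``no invariant density for $P(s)$'', and \cite[Theorem~7.11.1]{almcmbk94} to pass from sweeping of $P(s)$ to sweeping of the semigroup, you supply short self-contained arguments (the averaging $\tilde g=\int_0^1 P(r)g\,dr$ and the compactness/equicontinuity step), which is perfectly fine.
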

\begin{proof}
If $\Ex(-\log\theta_1)=\infty$ then $\overline{f}_*$ is not integrable and the semigroup $\{P(t)\}_{t\ge 0}$ has no invariant density, since if there were one, then it would be a scalar multiple of $\overline{f}_*$, by \cite[Corollaries 3.11, 3.12]{biedrzyckatyran}.
Hence, for every $s>0$ the operator $P(s)$ does not have an invariant density, by \cite[Proposition 7.12.1]{almcmbk94}.
From Lemma~\ref{l:preH} and Proposition~\ref{p:sweep} it follows that the operator $P(s)$ is sweeping which together with  \cite[Theorem 7.11.1]{almcmbk94} implies that the semigroup $\{P(t)\}_{t\ge 0}$ is sweeping from every set $B$ satisfying
\[
\int_B \overline{f}_*(x)m(dx)<\infty.
\]
We have $\varphi(x)=\alpha x^\alpha \ge \varphi(x_0)>0$ for all $x\ge x_0>0$. From this and \eqref{e:integ}  we see that $\overline{f}_*$ is integrable over intervals $B=(x_0,\infty)$, $x_0>0$.  Consequently,
\[
\lim_{t\to\infty}\int_{x_0}^\infty P(t)c_0(x)xdx=0,
\]
which together with \eqref{eq:eq} completes the proof.
\end{proof}

\begin{remark}\label{r:sw} It is interesting to note that every solution $c$ of \eqref{eq:emass} with initial condition $c_0\in D(m)$ satisfies
\[
\lim_{t\to\infty} \int_{x_0}^\infty c(t,x)xdx=0\quad \text{for all }x_0>0.
\]
To see this observe that if \eqref{eq:flm} does not hold then this is a consequence of Proposition~\ref{p:sweeping}, since $\gamma(t)\ge 1$ and $c\ge 0$, while if \eqref{eq:flm} holds then
\[
\int_{x_0}^\infty c(t,x)xdx\le \int_{0}^\infty |c(t,x)-c_*(t,x)|xdx+\int_{x_0}^\infty c_*(t,x)xdx
\]
and the right-hand side converges to zero as $t\to \infty$, by Theorem~\ref{thm:asc} and integrability of $u_*$.
\end{remark}

\section*{Acknowledgments} We are indebted to the anonymous referees for their
comments that have materially improved this paper.

\end{document}